\def\@seccntformat#1{\csname the#1\endcsname. \textsc}
\newenvironment{keepintact}
  {\par\nobreak\vfil\penalty0\vfilneg
   \vtop\bgroup}
  {\par\xdef\tpd{\the\prevdepth}\egroup
   \prevdepth=\tpd}
\newenvironment{enumrealm}{\setlength{\abovedisplayskip}{5pt}
\setlength{\belowdisplayskip}{5pt}}{\setlength{\abovedisplayskip}{10.0pt plus 2.0pt minus 5.0pt}
\setlength{\belowdisplayskip}{10.0pt plus 2.0pt minus 5.0pt}}
\DeclarePairedDelimiter\abs{\lvert}{\rvert}
\newcommand{\ufd}{p_1^{\alpha_1} p_2^{\alpha_2} \cdots p_s^{\alpha_s}}
\newcommand{\ufdsh}{p_1^{\alpha_1} \cdots p_s^{\alpha_s}}
\newcommand{\aut}[1]{\operatorname{Aut}(#1)}
\newcommand{\cyc}[1]{\operatorname{C}_{#1}}
\newcommand{\Mod}[1]{\ (\mathrm{mod} \ #1)}
\newcommand{\qo}{\tilde{q}}
\newcommand{\pin}{\tilde{p}}
\newcommand{\tilpi}{\tilde{\pi}}
\newcommand{\piga}{\pi_\Gamma}
\newcommand{\pila}{\pi_\Lambda}
\newcommand{\fibo}[1]{F_{#1}}
\newcommand{\qlame}{\Lambda_{5,\text{I}}}
\newcommand{\qlamz}{\Lambda_{5,\text{II}}}
\newcommand{\qlamd}{\Lambda_{5,\text{III}}}
\newcommand{\slame}{\Lambda_{6, \text{I}}}
\newcommand{\slamz}{\Lambda_{6, \text{II}}}
\newcommand{\m}[1]{\text{M}_{#1}}
\newcommand{\hlame}{\Lambda_7}
\newcommand{\hthref}[1]{\hyperref[#1]{\thref{#1}}}
\NewDocumentCommand{\listspace}{}{}
\NewDocumentCommand{\textspace}{}{\linespread{1}\selectfont}
\NewDocumentCommand{\drawunspace}{}{\vspace{-0.5\baselineskip}}
\NewDocumentCommand{\case}{m}{\smash{\sffamily \textbf{Case #1.} \rmfamily }}
\NewDocumentCommand{\scase}{m}{\smash{\sffamily {Subcase #1.} \rmfamily }}
\NewDocumentCommand{\hold}{m}{Hölder}
\NewDocumentCommand{\hg}{m}{Hölder graph}
\NewDocumentCommand{\hgs}{m}{Hölder graphs}
\theoremstyle{plain}
\newtheorem{thm}{Theorem}[section]
\newtheorem{eufact}{Fact}[section]
\newtheorem{lem}{Lemma}[section]
\newtheorem{prop}{Proposition}[section]
\newtheorem{cor}[prop]{Corollary}
\theoremstyle{definition}
\tikzset
{%
point/.style={fill=black, draw, circle, inner sep=0.042cm, outer sep=0.12cm},
soint/.style={fill=black, draw, circle, inner sep=0.022cm, outer sep=0.1cm},
walk/.pic={
	\node [soint] (a) {};
	\node [soint,right=of a] (b) {};
	\path [->] (a) edge (b);
},
twalk/.pic={
	\node [soint] (a) {};
	\node [soint,right=of a] (b) {};
	\node [soint, right=of b] (c) {};
	\path [->] (a) edge (b)
	(b) edge (c);
},
ewalk/.pic={
	\node [soint] (a) {};
	\node [soint, right=of a] (b) {};
},
w/.style={baseline=-0.4ex},
ww/.style={baseline=-0.4ex, node distance=0.37cm}
}
\begin{document}
\title{Orders for which there exist exactly six or seven groups}
\date{May 6, 2024}
\author{Aban S. Mahmoud}
\maketitle

\begin{abstract}
	Much progress has been made on the problem of calculating $g(n)$ for various classes of integers $n$,  where $g$ is the group-counting function. We approach the inverse problem of solving the equations $g(n) = 6$ and $g(n) = 7$ in $n$. The determination of $n$ for which $g(n) = k$ has been carried out by G. A. Miller for $1 \le k \le 5$.
\end{abstract}

\textup{2020} \textit{Mathematics Subject Classification}: \textup{20D60, 05C69}.

\section{Introduction}
The function $g(n)$, defined as the number of groups of order $n$ (up to isomorphism), has various interesting arithmetical properties. It is an elementary fact, for example, that we have $g(p) = 1$ for all primes $p$, and an application of Sylow theorems shows that $g(pq) = 1$ whenever $p, q$ are primes with $q \not\equiv 1 \Mod{p}$ and $q > p$. This raises the question: Which $n$ satisfy $g(n) = 1$? Such $n$ are called \emph{cyclic} numbers. It turns out that $n$ is cyclic precisely when it is coprime with $\phi(n)$, where $\phi$ is Euler's totient function {\cite{szele}}. More generally, we can consider the equation $g(n) = k$ for a fixed integer $k$.

Miller answers this question for $k = 1, 2, 3$ in {\cite{miller1}} and for $k = 4, 5$ in {\cite{miller2}}. It has been re-derived multiple times: For instance, a short derivation of the cases $1 \le 3 \le 4$ appears in {\cite{olsson}}, and for $1 \le k \le 4$ in {\cite{gnumoas}}. In this paper, we consider the cases $k = 6$ and $k = 7$. After this paper was written, I was made aware of a 1936 paper {\cite{sigley}} addressing the case \mbox{$k = 6$.}

A beautiful formula for $g(n)$ when $n$ is square-free has been discovered by \hold{1} {\cite[Thm.~5.1]{gnumoas}}. An application of it is that, if $p, q, r$ are primes with $q \equiv r \equiv 1 \Mod{p}$ and $qr$ is a cyclic number, then $g(pqr) = p + 2$, already showing that the image of $g$ is infinite. There is no similarly explicit formula for cube-free $n$, but there is an efficient algorithm {\cite{cube-free}}, and there are relatively simple formulae for $g(n)$ when $n$ has a small number of prime factors with small exponents.

By forming direct products, it is clear that $g(ab) \ge g(a)g(b)$, at least when $a$ and $b$ are coprime. Since \mbox{$g(p^2) = 2$,} $g(p^3) = 5$ and $g(p^\alpha) \ge 14$ for all primes $p$ and all $\alpha \ge 4$ (see, for example, \cite[Thm.~3.1]{gnumoas}), it is enough to consider fourth-power-free $n$. The key computational tool will be directed graphs: We associate a directed graph to each number, with the vertices being the prime powers that divide $n$ and the edges indicating the existence of nontrivial semidirect products.

\section{Hölder graphs}
\subsection{Hölder's formula}
Given a square-free integer $n$, a prime factor $p$ and a subset $\pi$ of its prime factors, we let $v(p, \pi)$ be the number of $q$ in $\pi$ such that $q \equiv 1 \Mod{p}$. In this case, we say that $p$ is \textbf{related} to $q$, and $p$ and $q$ are related. We also let $\Gamma$ be the set of all prime factors of $n$. \mbox{Then we have}
\begin{thm}[\textbf{\hold{1}'s formula}]\thlabel{euholder} In this notation,
	\begin{equation*}
		g(n) = \sum_{\pi \subseteq \Gamma} \prod_{p \notin \pi} \frac{p^{v(p, \pi)} - 1}{p - 1}
	\end{equation*}
\end{thm}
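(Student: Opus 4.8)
The plan is to count groups of square-free order $n$ by the structure theorem for such groups. For square-free $n$, every group $G$ of order $n$ is metacyclic, in fact a semidirect product $\mathbb{Z}_m \rtimes \mathbb{Z}_k$ where $mk = n$ and $\gcd(m,k) = 1$; the kernel $K$ of the action is the Fitting subgroup (equivalently the nilpotent residual's complement) and is cyclic of some order $m$, while $G/K$ is cyclic of order $k$. So I would first establish: (i) such a $G$ is determined up to isomorphism by the pair consisting of the \emph{set} $\pi$ of primes dividing $k = |G/K|$ (the "non-normal part") together with the specific homomorphism $\mathbb{Z}_k \to \aut{\mathbb{Z}_m} \cong \prod_{p \mid m}\mathbb{Z}_{p-1}$, up to the natural equivalence; and (ii) two homomorphisms give isomorphic groups exactly when they have the same image subgroup in $\aut{\mathbb{Z}_m}$, because $\mathbb{Z}_k$ is cyclic and any automorphism of the image can be realized by composing with an automorphism of $\mathbb{Z}_k$. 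This reduces the count to: for each admissible partition $n = mk$ with the primes of $k$ forming a set $\pi$, count the number of subgroups (equivalently, since everything is cyclic of squarefree-ish order, the number of suitable cyclic subgroups of order exactly $k$) of $\prod_{p \mid m}\mathbb{Z}_{p-1}$.

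Concretely I would index everything by the set $\pi \subseteq \Gamma$ of primes appearing in the quotient, so that $m = \prod_{p \notin \pi} p$ and $k = \prod_{q \in \pi} q$. A homomorphism $\mathbb{Z}_k \to \prod_{p \notin \pi}\mathbb{Z}_{p-1}$ is the same as, for each $p \notin \pi$, a homomorphism $\mathbb{Z}_k \to \mathbb{Z}_{p-1}$, and the number of such is $\gcd(k, p-1) = \prod_{q \in \pi,\ q \mid p - 1} q$, which by the definition of $v(p,\pi)$ is exactly... not quite $p^{v(p,\pi)}$, so here is the key subtlety: we are not counting homomorphisms but counting the \emph{images}, and moreover $\mathbb{Z}_{p-1}$ is the wrong target — the action of $\mathbb{Z}_k$ on the part of $K$ of order $p$ is really an action through a subgroup of $\aut{\mathbb{Z}_p}\cong\mathbb{Z}_{p-1}$, but isomorphism classes of the resulting extension are counted by the \emph{number of cyclic subgroups} killed appropriately, which after the equivalence collapses to counting $\frac{p^{v(p,\pi)} - 1}{p-1} + $ (the trivial action), summed correctly. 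I would carefully derive that the number of isomorphism classes of $G$ with non-normal primes exactly $\pi$ equals $\prod_{p \notin \pi}$ (number of subgroups of $\mathbb{Z}_p \rtimes (\text{cyclic of order dividing } \gcd(k,p-1))$ up to the equivalence), and then check by an elementary count that this product is $\prod_{p \notin \pi} \frac{p^{v(p,\pi)}-1}{p-1}$, using that $\frac{p^{v}-1}{p-1} = 1 + p + \cdots + p^{v-1}$ counts the cyclic subgroups of order $p$ together with... — this bookkeeping is where all the work sits.

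The main obstacle, as the previous sentence suggests, is precisely pinning down what the factor $\frac{p^{v(p,\pi)}-1}{p-1}$ is counting and showing the isomorphism classes are in bijection with the product set, i.e. that the "local" choices at distinct primes $p \notin \pi$ are independent and that the equivalence relation (post-composition by $\aut{\mathbb{Z}_k}$, and the freedom to relabel) exactly accounts for the division by $p-1$. The cleanest way is: an action of the cyclic group $C_k$ making $K = \prod_{p\notin\pi}\mathbb{Z}_p$ a faithful-on-its-support module breaks up as a choice, for each $p \notin \pi$, of a subgroup $H_p \le \mathbb{Z}_{p-1}$ that is cyclic of order dividing $k$ and such that $C_k$ surjects onto each $H_p$ compatibly; the number of subgroups of $\mathbb{Z}_{p-1}$ of order dividing $\gcd(k,p-1) = \prod_{q\in\pi, q\mid p-1} q$ is $2^{v(p,\pi)}$, but the ones that are "trivial on some $\mathbb{Z}_q$-component" get absorbed into smaller $\pi$, and an inclusion–exclusion over $\pi$ — or, equivalently, directly counting the surjections up to automorphism — yields the stated sum. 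So I would (1) set up the structure theorem, (2) reduce to counting module structures up to equivalence, (3) localize at each $p \notin \pi$ and reduce to counting subgroups of $\mathbb{Z}_{p-1}$, (4) perform the elementary count giving the polynomial $\frac{p^{v(p,\pi)}-1}{p-1}$ (or its building blocks) and assemble, and (5) sum over $\pi$. Steps (3)–(4), the passage from "number of subgroups/homomorphisms" to the exact geometric-series factor via the equivalence, is the crux; everything else is standard.
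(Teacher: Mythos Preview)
The paper does not prove this theorem; it simply cites an external reference. Your proposal, by contrast, sketches the standard proof from scratch via the structure theory of groups of square-free order. The strategy is correct, but your execution has the roles of $\pi$ and its complement reversed, and this is the source of the confusion you run into mid-argument.

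Concretely: for a group $G$ of square-free order $n$, the canonical decomposition is $G \cong K \rtimes H$ where $K = \operatorname{Fit}(G)$ is cyclic and $H \cong G/K$ is cyclic acting faithfully. In the paper's formula, $\pi$ is the set of primes dividing $|K|$ (the \emph{kernel}), not the quotient. With that convention, the homomorphism to count is
\[
\phi : \prod_{p \notin \pi} \cyc{p} \longrightarrow \aut{\textstyle\prod_{q \in \pi}\cyc{q}} = \prod_{q \in \pi}\cyc{q-1},
\]
and one decomposes by \emph{source}, not target: for each $p \notin \pi$, the restriction $\phi|_{\cyc{p}}$ lands in $\prod_{q \in \pi}\cyc{q-1}$, and the number of such homomorphisms is exactly $p^{v(p,\pi)}$, since for each $q \in \pi$ with $p \mid q-1$ there are $p$ choices and otherwise only the trivial one. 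Requiring $\phi|_{\cyc{p}}$ to be nontrivial (so that $p$ is genuinely a non-Fitting prime) removes one, and quotienting by $\aut{\cyc{p}}\cong\cyc{p-1}$ (which acts freely on the nontrivial homomorphisms, since the target is abelian) gives the factor $\tfrac{p^{v(p,\pi)}-1}{p-1}$ directly. The independence across $p \notin \pi$ is immediate because $\aut{\prod_{p\notin\pi}\cyc{p}} = \prod_{p\notin\pi}\aut{\cyc{p}}$ acts componentwise.

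So your steps (1)--(2) and (5) are fine, but step (3) should localise at primes of the \emph{quotient} rather than the kernel, and then step (4) requires no inclusion--exclusion or delicate bookkeeping at all: the geometric-series factor drops out in one line. Your remark ``not quite $p^{v(p,\pi)}$'' is the symptom; swapping the role of $\pi$ is the cure.
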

\begin{proof} See {\cite[Thm.~5.1]{gnumoas}}. \end{proof}

It is therefore natural to define the \emph{\hg{1}} $\Gamma = \Gamma(n)$ of a square-free integer $n$ to be the unlabeled directed graph whose vertices are the prime factors, and where we have an edge $p \rightarrow q$ precisely when $p$ is related to $q$. Observe also that when the out-degree of every vertex is at most 1, $g(n)$ depends solely on the shape of the \hg{1}, and therefore we are justified in speaking of $g(\Gamma).$ We call both $n$ and $\Gamma$ \emph{regular} in this case. We will occasionally treat directed acyclic graphs $\Gamma$ where we only know the \emph{labels}, or the numerical values, of the vertices whose out-degree exceeds $1$. In this case, we can still speak of $g(\Gamma)$ without ambiguity, as it will be independent of the labels of the rest of the vertices. We also observe that Dirichlet's theorem on arithmetic progressions shows that a given (unlabeled) directed acyclic graph is always realizable as the \hg{1} of some odd square-free integer.

We generalize this to arbitrary $n = \ufdsh$ as follows: The vertices are now the $p_i^{\alpha_i}$, and an edge $p^\alpha \rightarrow q^\beta$ exists precisely when $q^j \equiv 1 \Mod{p^i}$ for some $1 \le i \le \alpha$ and $1 \le j \le \beta$. We will call this the \emph{generalized \hg} of $n$, also denoted by $\Gamma(n)$. Typically, $n$ will be cube-free, in which case we write $p^2 \dashrightarrow q$ to mean $p \parallel q - 1$, and $q \dashrightarrow p^2$ to mean $q \mid p + 1$ but $q \nmid p - 1$. We call these two types of arrows \emph{weak}, and other arrows \emph{strong}.

A vertex is said to be \emph{initial} if it has an out-degree of $0$, and \emph{terminal} if it has an in-degree of $0$. We also let $S(\pi, \Gamma) = \prod_{p \notin \pi}h_\Gamma(p, \pi)$ be the summand in Hölder's formula corresponding to the subset $\pi$. If there is a vertex $p$ not connected to any vertex in $\pi,$ the entire summand $S(\pi \Gamma)$ vanishes. Motivated by this observation, we call a subset $\pi$ \emph{central} if, for every vertex outside of it, there exists at least one edge to a vertex inside it -- equivalently, if $S(\pi \Gamma)$ does not vanish. We remark that $n$ is regular precisely when $g(\Gamma(n))$ is equal to the number of central subsets.

\subsection{Connectivity and multiplicativity}
We have already observed that $g(ab) \ge g(a)g(b)$ for coprime $a, b$. When does equality hold? If it does hold, then every group of order $ab$ splits as a direct product of groups of orders $a$ and $b.$ Coprimality is therefore a necessary condition: If $p$ divides both $a$ and $b,$ we simply observe that the group $\cyc{a/p} \times \cyc{p^2} \times \cyc{b/p}$ does not split in this way. Next, it must be impossible to form semidirect products. If $p$ divides $b$ and $q^n$ divides $a$, we observe that $\abs{\aut{\cyc{q}^n}}$ has $q^i - 1$ as a factor for all $1 \le i \le n$, and therefore a nontrivial semidirect product exists if there is a relation $p \rightarrow q^n.$ In this case, $a$ and $b$ are said to be \emph{arithmetically dependent}. We conclude that another necessary condition is $\Gamma(n) = \Gamma(a) \sqcup \Gamma(b)$.  The converse is also true:

\begin{thm}\thlabel{eufrobenius}
	Suppose we have $n = ab$. Then the following are equivalent:
	\begin{enumerate}
		\item $g(n) = g(a)g(b)$,
		\item $\abs{G} = n$ implies $G \cong A \times B$ where $\abs{A} = a$ and $\abs{B} = b$,
		\item $a$ and $b$ are arithmetically independent,
		\item $\Gamma(n) = \Gamma(a) \sqcup \Gamma(b)$.
	\end{enumerate}
\end{thm}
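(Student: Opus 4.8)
The plan is to prove the cycle of implications (2)$\Rightarrow$(1)$\Rightarrow$(3)$\Rightarrow$(4)$\Rightarrow$(2), or a convenient variant; several of the arrows have already been sketched in the paragraph preceding the statement, so the task is mainly to assemble them and to close the one genuine gap, namely (4)$\Rightarrow$(2).

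\textbf{Easy implications.} The implication (2)$\Rightarrow$(1) is immediate by counting isomorphism classes, since the map $(A,B)\mapsto A\times B$ is injective on pairs with $\abs{A}=a$, $\abs{B}=b$ once $a,b$ are coprime — and coprimality itself follows from (2) via the non-splitting example $\cyc{a/p}\times\cyc{p^2}\times\cyc{b/p}$ already exhibited in the text, so I would first record that (2) forces $\gcd(a,b)=1$. For (1)$\Rightarrow$(3) I argue contrapositively: if $a,b$ are arithmetically dependent, then either they share a prime (and $g(ab)>g(a)g(b)$ because $\cyc{p^2}\times\cdots$ is not a product of a group of order $a$ and one of order $b$ while $\cyc p\times\cyc p\times\cdots$ is, giving a strict inequality after matching up the obvious direct-product pairs), or there is a relation $p\to q^m$ with $p\mid b$, $q^m\mid a$, in which case the nontrivial semidirect product $\cyc q^m\rtimes\cyc p$ times a cyclic complement of order $n/(pq^m)$ is a group of order $n$ not of the form $A\times B$, and again it is genuinely new, so $g(ab)>g(a)g(b)$. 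The equivalence (3)$\Leftrightarrow$(4) is essentially the definition of arithmetic dependence together with the description of $\Gamma(n)$: a relation $p^\alpha\to q^\beta$ with the two prime powers on opposite sides of the factorization is exactly an edge of $\Gamma(n)$ crossing the partition $\{\Gamma(a),\Gamma(b)\}$, and I should note that coprimality is needed here too (if $p$ divides both $a$ and $b$ then $\Gamma(a)$ and $\Gamma(b)$ are not even disjoint as vertex sets), so I would fold $\gcd(a,b)=1$ into the statement of (4) or prove it separately under each hypothesis.

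\textbf{The main step: (4)$\Rightarrow$(2).} This is where the real work lies. Assuming $\Gamma(n)=\Gamma(a)\sqcup\Gamma(b)$ with $\gcd(a,b)=1$, I want to show every $G$ of order $n$ splits as $A\times B$. Let $G$ have order $n=ab$. The natural approach is: take a Hall $\pi$-subgroup $A$ and Hall $\pi'$-subgroup $B$, where $\pi$ is the set of primes dividing $a$; by Hall's theorem these exist once we know $G$ is solvable, and $G$ is solvable since $n$ is fourth-power-free and — more to the point — because in this regime one can invoke that groups of such orders are solvable (or reduce to it; a clean route is Burnside/Feit–Thompson-type facts, but for the orders actually in play a direct argument suffices). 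Having $A$ and $B$, it remains to show $A\nsub G$ and $B\nsub G$, for then $G=A\times B$. The obstruction to normality is precisely the possibility of a nontrivial action, and the hypothesis (4) — no cross edges — is exactly what rules that out: for each prime $p\mid b$ acting on a Sylow subgroup $Q$ of $A$ (or on a chief factor inside $A$), a nontrivial action would force $p\mid \abs{\aut Q}$, hence a relation $p\to q^j$ for some prime power $q^j$ dividing $a$, contradicting (4). Carrying this out carefully requires peeling $G$ apart one chief factor at a time and at each stage quoting that $\abs{\aut{\cyc q^k}}=q^{k-1}(q^k-1)\cdots$ (more precisely $\abs{\gl k q}$ for an elementary abelian chief factor $\cyc q^k$) has all the relevant factors $q^i-1$, so that a nontrivial action of a $p$-element produces the forbidden edge; this is the point where the generalized Hölder graph's definition (edge $p^\alpha\to q^\beta$ iff $q^j\equiv1\Mod{p^i}$ for some $i\le\alpha,j\le\beta$) must be matched exactly against the order of the automorphism group.

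\textbf{Anticipated obstacle.} The genuine difficulty is the bookkeeping in (4)$\Rightarrow$(2): establishing solvability cleanly (or explaining why it is automatic in the fourth-power-free range we care about), and then running the induction on chief factors while correctly identifying which arrow in $\Gamma(n)$ a hypothetical nontrivial action would create — in particular handling the case where a chief factor of $A$ is an elementary abelian $q$-group of rank up to $2$ (since $n$ is cube-free in the applications) and checking that $\abs{\gl2q}=(q^2-1)(q^2-q)$ contributes the factor $q-1$ but also $q+1$, matching the weak arrows $q\dashrightarrow p^2$ in the generalized graph. Once the dictionary between "$p$-element acts nontrivially on a $q$-chief-factor of $A$'' and "edge from the $b$-side to the $a$-side in $\Gamma(n)$'' is nailed down, the rest is routine, and the cycle closes. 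I would present (4)$\Rightarrow$(2) in full and treat the three easy implications more briskly, citing the non-splitting examples already displayed in the surrounding text.
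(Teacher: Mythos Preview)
The paper does not actually prove this theorem: its entire proof is the single line ``See \cite[Lem.~21.19]{monolith}.'' So there is no argument in the paper to compare your proposal against; you are supplying what the author chose to outsource.

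Your overall architecture is sound and is the standard route to this result. One point, however, is not a cosmetic gap but a genuine hole as written: solvability of $G$. You assert that $G$ is solvable ``since $n$ is fourth-power-free'', but fourth-power-freeness does not imply solvability ($\lvert A_5\rvert=60$ is cube-free), and in any case the theorem is stated for arbitrary $n$, not only for the $n$ that arise later in the paper. Your fallback ``a clean route is Burnside/Feit--Thompson-type facts'' is a gesture, not an argument. The correct observation, which you should make explicit, is this: if $a,b>1$ are arithmetically independent then $n=ab$ must be \emph{odd}, because $2^\alpha\to q^\beta$ holds for every odd prime $q$ (take $i=j=1$: $q\equiv 1\Mod 2$), so whichever of $a,b$ is even would be related to every prime of the other. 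Hence either one of $a,b$ equals $1$ (and the theorem is trivial) or $n$ is odd and Feit--Thompson gives solvability outright. The paper itself invokes Feit--Thompson later (Subcase C.3), so this is in keeping with its toolkit.

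Once solvability is in hand, your chief-factor argument for (4)$\Rightarrow$(2) is correct in outline; just be sure to say explicitly that a coprime action which is trivial on every chief factor is trivial (this is where $\gcd(a,b)=1$ does real work, ruling out unipotent actions), so that centralising each chief factor of $A$ really forces $B$ to centralise $A$. The easy implications are fine, and your remark that coprimality must be folded into (3) and (4) is well taken: in (4) it is hiding in the symbol $\sqcup$.
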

\begin{proof}
	See {\cite[Lem.~21.19]{monolith}}.
\end{proof}

\begin{prop}\thlabel{euufd}
	For positive $n,$ we have a unique factorization $n = n_1 \cdots n_k m$ where\pagebreak[3]
	\begin{enumerate} \listspace
		\item The $k + 1$ factors are pairwise arithmetically independent,
		\item Each $n_i$ is connected with $g(n_i) \ge 2$,
		\item $m$ is cyclic, that is, $g(m) = 1$.
	\end{enumerate} \textspace
\end{prop}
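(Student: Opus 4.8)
The plan is to push everything through \hthref{eufrobenius}, which lets us replace arithmetic independence by the purely combinatorial condition that $\Gamma(n)$ splits as a disjoint union of directed graphs. The slogan is: the factors $n_1, \dots, n_k$ are forced to be exactly the (weakly) connected components of $\Gamma(n)$ on which $g$ exceeds $1$, and $m$ collects the remaining components.

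First I would upgrade \hthref{eufrobenius} to a many-factor statement: if $n = c_1 \cdots c_r$ with the $c_i$ pairwise arithmetically independent, then $\Gamma(n) = \bigsqcup_i \Gamma(c_i)$ and $g(n) = \prod_i g(c_i)$. This is an easy induction on $r$: writing $n = c_1 \cdot (c_2 \cdots c_r)$, the subgraph $\Gamma(c_1)$ has no edge, in either direction, to any $\Gamma(c_j)$, hence none to $\Gamma(c_2 \cdots c_r)$, so $c_1$ and $c_2 \cdots c_r$ are arithmetically independent, $g$ multiplies over them, and the inductive hypothesis finishes. I would also note the converse form we use: since whether $p^\alpha \to q^\beta$ is an edge of $\Gamma(n)$ depends only on $p, q, \alpha, \beta$, the induced subgraph of $\Gamma(n)$ on the vertices of a divisor $d \mid n$ is exactly $\Gamma(d)$; consequently, if a set of vertices of $\Gamma(n)$ is a union of connected components, the corresponding divisors are pairwise coprime and pairwise arithmetically independent, and $g$ is multiplicative over them.

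For existence, let $C_1, \dots, C_t$ be the connected components of $\Gamma(n)$ and let $d_j$ be the product of the prime powers lying in $C_j$, so $n = d_1 \cdots d_t$ with the $d_j$ pairwise arithmetically independent. Reorder so that $g(d_j) \ge 2$ exactly for $j \le k$, set $n_j = d_j$ for $j \le k$, and put $m = d_{k+1} \cdots d_t$. Then the $k+1$ factors are pairwise arithmetically independent, being distinct unions of components; each $n_i$ is connected with $g(n_i) \ge 2$ by construction; and $g(m) = \prod_{j>k} g(d_j) = 1$, so $m$ is cyclic.

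For uniqueness, suppose $n = n_1 \cdots n_k m$ satisfies (a)--(c). By the many-factor version just established, $\Gamma(n) = \Gamma(n_1) \sqcup \cdots \sqcup \Gamma(n_k) \sqcup \Gamma(m)$ with no edges between the pieces, so each piece is a union of connected components of $\Gamma(n)$; since $n_i$ is connected, $\Gamma(n_i)$ is a single component, and since $1 = g(m) = \prod_{C_j \subseteq \Gamma(m)} g(d_j)$ with each factor at least $1$, every component inside $\Gamma(m)$ has $g = 1$. Hence $\{n_1, \dots, n_k\}$ must be exactly the set of component-divisors $d_j$ with $g(d_j) \ge 2$, which depends only on $n$, and then $m = n/(n_1 \cdots n_k)$ is determined as well; this is uniqueness up to the order of the $n_i$. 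The only place needing mild care is the translation step ``$\Gamma = \Gamma(a) \sqcup \Gamma(b)$ forces each component to lie on one side,'' i.e. keeping straight that arithmetic independence is about the underlying undirected connectivity of the directed graph $\Gamma(n)$; beyond that the argument is bookkeeping on top of \hthref{eufrobenius}.
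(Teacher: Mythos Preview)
Your proposal is correct and follows essentially the same route as the paper: both arguments read the factorization off the connected components of the generalized \hg{1} via \thref{eufrobenius}. Your write-up is in fact more careful than the paper's one-line proof---you separate components by the condition $g \ge 2$ versus $g = 1$ (rather than ``at least two vertices'' versus ``isolated,'' which misses isolated prime-power vertices like $p^2$), and you spell out uniqueness explicitly---but the underlying idea is identical.
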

\begin{proof}
	This follows immediately by considering the generalized \hg{1} of $n$ and letting the $n_i$ be the factors corresponding to the connected component with at least two vertices, and $m$ the product of the values of the isolated vertices.
\end{proof}

In solving $g(n) = c$, it is useful to define the \emph{cyclic part} of $n$ to be $m$ (in this notation), and call $n$ \emph{cyclic-free} if $m = 1$. Next, whenever $c = c_1 \cdots c_s$, finding solutions $g(n_i) = c_i$ with pairwise independent $n_i$ leads to a solution to $c$. It is therefore natural to consider ``prime'' values of $n$, that is, those with $k = 1$. We will call cyclic-free $n$ with $k = 1$ \emph{connected}. By considering non-nilpotent groups, we can get a lower bound on $g(n)$. We will only make use of the following (weak) lower bound:

\begin{prop}\thlabel{eunnp}
	If $n = \ufdsh$ is connected, then $g(n) \ge g(p_1^{\alpha_1})\cdots g(p_s^{\alpha_s}) + s - 1.$
\end{prop}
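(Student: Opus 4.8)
The plan is to exhibit $g(p_1^{\alpha_1})\cdots g(p_s^{\alpha_s}) + (s-1)$ pairwise non-isomorphic groups of order $n$: the product counts the nilpotent ones, while the remaining $s-1$ groups, all non-nilpotent, will be read off one at a time from the edges of a spanning tree of $\Gamma(n)$.

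First I would recall the standard fact that a finite group is nilpotent exactly when it is the internal direct product of its Sylow subgroups. Hence every nilpotent group of order $n$ has the form $P_1 \times \cdots \times P_s$ with $\abs{P_i} = p_i^{\alpha_i}$, and two such products are isomorphic iff the corresponding factors are; so there are exactly $g(p_1^{\alpha_1})\cdots g(p_s^{\alpha_s})$ of them. If $s = 1$ the asserted inequality is then trivial, so suppose $s \ge 2$. Since $n$ is connected, $\Gamma(n)$ is a connected graph, hence has a spanning tree $T$, and the $s-1$ edges of $T$ join $s-1$ distinct unordered pairs of vertices.

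For each edge $f$ of $T$, suitably oriented as $p^{\alpha_p} \rightarrow q^{\alpha_q}$ in $\Gamma(n)$ with $p \neq q$, the definition of the generalized Hölder graph furnishes $\mu \le \alpha_p$ and $j \le \alpha_q$ with $q^j \equiv 1 \Mod{p^\mu}$. Since $p^\mu \mid q^j - 1 = \abs{\mathbb{F}_{q^j}^\times}$ and $\mathbb{F}_{q^j}^\times$ acts faithfully on $\mathbb{F}_{q^j}\cong\mathbb{F}_q^{\,j}$, there is a nontrivial action of $\cyc{p^\mu}$ on $\cyc{q}^j$ through $\gl{j}{q}$. I would then set
\[
  N_f := \bigl(\cyc{q}^j \rtimes \cyc{p^\mu}\bigr) \times \cyc{q^{\alpha_q - j}} \times \cyc{p^{\alpha_p - \mu}},
  \qquad
  G_f := N_f \times \prod_{p_\ell \notin \{p,q\}} \cyc{p_\ell^{\alpha_\ell}} .
\]
Here $\abs{N_f} = p^{\alpha_p} q^{\alpha_q}$, and $N_f$ is non-nilpotent because it has the non-nilpotent direct factor $\cyc{q}^j \rtimes \cyc{p^\mu}$ (whose Sylow $p$-subgroup cannot be normal, the action being nontrivial); hence $G_f$ is a non-nilpotent group of order $n$, in particular distinct from all the nilpotent groups counted above. (Alternatively, the bare existence of a non-nilpotent $N_f$ of order $p^{\alpha_p}q^{\alpha_q}$ is guaranteed by \hthref{eufrobenius}, and only that is used below.)

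The main obstacle is to show that the $s-1$ groups $\{G_f\}_{f \in T}$ are pairwise non-isomorphic, and the way I would handle it is to isolate an isomorphism invariant that recovers the pair $\{p,q\}$ from $G_f$. The claim is: writing $G_f = N_f \times C$ with $C$ cyclic of order coprime to $\abs{N_f}$, a Sylow $p_\ell$-subgroup of $G_f$ is a direct factor of $G_f$ \emph{if and only if} $p_\ell \notin \{p,q\}$. The "if" direction is immediate, since for $p_\ell \notin \{p,q\}$ the Sylow $p_\ell$-subgroup of $G_f$ is the corresponding cyclic direct factor of $C$. For "only if", suppose the Sylow $q$-subgroup $S$ of $G_f$ were a direct factor, $G_f = S \times H$; then $S$ is normal, hence the unique Sylow $q$-subgroup of $G_f$, and as $q^{\alpha_q}\mid\abs{N_f}$ we get $S \le N_f$, so the modular law yields $N_f = N_f \cap SH = S\,(N_f\cap H)$, an internal direct product $N_f = S \times (N_f \cap H)$ of a $q$-group and a $p$-group, contradicting non-nilpotency of $N_f$ --- the case $p_\ell = p$ being symmetric. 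Granting this, if $f \neq f'$ are edges of $T$ their vertex-pairs differ, so some prime $p_\ell$ is an endpoint of one but not the other, whence the Sylow $p_\ell$-subgroup is a direct factor of exactly one of $G_f, G_{f'}$ and therefore $G_f \not\cong G_{f'}$. Adding these $s-1$ non-nilpotent groups to the $g(p_1^{\alpha_1})\cdots g(p_s^{\alpha_s})$ nilpotent ones gives $g(n) \ge g(p_1^{\alpha_1})\cdots g(p_s^{\alpha_s}) + s - 1$.
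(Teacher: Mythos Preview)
The paper does not actually supply a proof of this proposition; it only remarks beforehand that ``by considering non-nilpotent groups, we can get a lower bound on $g(n)$'' and then states the inequality. Your argument is correct and fleshes out precisely that hint: the $\prod_i g(p_i^{\alpha_i})$ nilpotent groups are counted in the obvious way, and the spanning-tree construction supplies the remaining $s-1$ non-nilpotent groups.

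The spanning-tree idea together with the invariant ``set of primes whose Sylow subgroup is \emph{not} a direct factor'' is a clean way to distinguish the groups $G_f$, and your modular-law computation showing that neither the $p$- nor the $q$-Sylow of $G_f$ can split off is sound (the key point being $S \le N_f$ because $q \nmid |C|$, whence $N_f = S(N_f \cap H)$ is an internal direct product of a $q$-group and a $p$-group). One small remark: when both directed edges $p^{\alpha_p} \to q^{\alpha_q}$ and $q^{\alpha_q} \to p^{\alpha_p}$ exist in $\Gamma(n)$, your phrase ``suitably oriented'' implicitly picks one, and although the resulting $N_f$ may depend on that choice, the invariant you use to separate the $G_f$'s---namely the unordered pair $\{p,q\}$---does not, so no harm is done.
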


\subsection{Splicing graphs}
Given disjoint graphs $\Gamma$ and $\Lambda$ with a fixed terminal vertex $\qo$ in $\Gamma$ and any vertex $\pin$ in $\Lambda$, we can make a new graph $\Gamma \rightarrow \Lambda$ by adjoining an arrow $\pin \rightarrow \qo$ to the union of $\Gamma$ and $\Lambda$. This depends on the choice of $\qo$ and $\pin$ in general, but we will not explicitly refer to them when there is no risk of confusion. In addition, we tacitly assume that we are given the functions $h_\Gamma$ and $h_\Lambda$. We use the notation $g(X; v)$ to refer to the sum in Hölder's formula but only considering subsets $\pi$ with $v \in \pi$.

\begin{prop}\thlabel{eujoin}
	In this notation, \begin{enumrealm}\begin{equation*} g(\Gamma \rightarrow \Lambda) = g(\Gamma)g(\Lambda) + g(\Gamma - \{\qo\})g(\Lambda; \pin). \end{equation*}\end{enumrealm}
\end{prop}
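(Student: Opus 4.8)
The plan is to expand $g(\Gamma\to\Lambda)$ directly from \hthref{euholder} and to sort the subsets $\pi$ of the vertex set $V(\Gamma)\sqcup V(\Lambda)$ according to whether they contain the head $\qo$ of the new arrow. For such a $\pi$ write $\piga=\pi\cap V(\Gamma)$ and $\pila=\pi\cap V(\Lambda)$. The starting point is a locality property: the graphs $\Gamma\sqcup\Lambda$ and $\Gamma\to\Lambda$ differ only in that $\pin$ has gained the single extra out-neighbour $\qo$, every other out-neighbourhood being untouched. Hence $h_{\Gamma\to\Lambda}(v,\pi)=h_\Gamma(v,\piga)$ for every $v\in V(\Gamma)\setminus\piga$, and $h_{\Gamma\to\Lambda}(v,\pi)=h_\Lambda(v,\pila)$ for every $v\in V(\Lambda)\setminus\pila$ with $v\neq\pin$, while the factor at $\pin$ (in the case $\pin\notin\pi$) equals $h_\Lambda(\pin,\pila)$ when $\qo\notin\pi$ and equals the value obtained by adjoining $\qo$ to the tally governing $\pin$ --- write $\bar h_\Lambda(\pin,\pila)$ for it --- when $\qo\in\pi$.

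Using this I would split $g(\Gamma\to\Lambda)=\sum_\pi S(\pi,\Gamma\to\Lambda)$ on $\qo$. If $\qo\notin\pi$ the new arrow is inert, so $S(\pi,\Gamma\to\Lambda)=S(\piga,\Gamma)\,S(\pila,\Lambda)$ and these subsets contribute $\bigl(g(\Gamma)-g(\Gamma;\qo)\bigr)g(\Lambda)$. If $\qo\in\pi$, the $\Gamma$-part of $S(\pi,\Gamma\to\Lambda)$ is still $S(\piga,\Gamma)$, so this block factors as $\bigl(\sum_{\piga\ni\qo}S(\piga,\Gamma)\bigr)$ times the sum over $\pila\subseteq V(\Lambda)$ of the $\Lambda$-part of $S(\piga\sqcup\pila,\Gamma\to\Lambda)$; and inspecting the cases $\pin\in\pila$ and $\pin\notin\pila$ shows this last $\Lambda$-sum is precisely $g(\Lambda^{+})$, where $\Lambda^{+}$ denotes $\Lambda$ with one new vertex of out-degree $0$ attached by an arrow out of $\pin$ --- equivalently, $g(\Lambda^{+})$ is the right-hand side of the proposition in the special case $\Gamma=\{\qo\}$.

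Two evaluations then finish the proof. First, $\sum_{\piga\ni\qo}S(\piga,\Gamma)=g(\Gamma;\qo)$, and moreover $g(\Gamma;\qo)=g(\Gamma-\{\qo\})$: because $\qo$ is terminal it has in-degree $0$, hence is never an out-neighbour of any vertex of $\Gamma$, so for $\piga\ni\qo$ and $v\in V(\Gamma)\setminus\piga$ we have $h_\Gamma(v,\piga)=h_\Gamma(v,\piga\setminus\{\qo\})=h_{\Gamma-\{\qo\}}(v,\piga\setminus\{\qo\})$ (deleting $\qo$ only destroys arrows out of $\qo$); thus $\piga\mapsto\piga\setminus\{\qo\}$ is an $S$-preserving bijection from the subsets of $\Gamma$ containing $\qo$ onto all subsets of $\Gamma-\{\qo\}$. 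Second, one must show $g(\Lambda^{+})=g(\Lambda)+g(\Lambda;\pin)$. Here the new vertex, having out-degree $0$, lies in every non-vanishing subset of $\Lambda^{+}$; writing such a subset as $\pila\cup\{\text{new vertex}\}$ and splitting on $\pin\in\pila$, the part with $\pin\in\pila$ reproduces $g(\Lambda;\pin)$ verbatim, while the part with $\pin\notin\pila$ --- using the explicit relation between $\bar h_\Lambda(\pin,\pila)$ and $h_\Lambda(\pin,\pila)$ --- must be shown to reassemble into $g(\Lambda)$. Putting the two evaluations together and substituting $g(\Gamma;\qo)=g(\Gamma-\{\qo\})$, the two blocks collapse to $\bigl(g(\Gamma)-g(\Gamma-\{\qo\})\bigr)g(\Lambda)+g(\Gamma-\{\qo\})\bigl(g(\Lambda)+g(\Lambda;\pin)\bigr)=g(\Gamma)g(\Lambda)+g(\Gamma-\{\qo\})g(\Lambda;\pin)$.

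The genuine obstacle is the identity $g(\Lambda^{+})=g(\Lambda)+g(\Lambda;\pin)$, that is, the bookkeeping showing that the enlarged-factor summands over the subsets $\pila\not\ni\pin$ reconstitute exactly one copy of $g(\Lambda)$; this is the only place where the precise form of the Hölder factor enters, and it calls for a careful subset-by-subset matching. Everything else --- the case split, the locality of the change to $h$, and the terminal-vertex identity for $g(\Gamma;\qo)$ --- is routine, so the whole proposition rests on this single pendant-vertex computation.
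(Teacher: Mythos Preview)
Your argument has a genuine gap, and it sits exactly where you flag the ``genuine obstacle'': the identity $g(\Lambda^{+})=g(\Lambda)+g(\Lambda;\pin)$ is \emph{false} when $\Lambda^{+}$ is formed by adding an arrow \emph{out of} $\pin$ to a fresh vertex. Take $\Lambda$ to be a single arrow $\pin\to c$, with $\pin$ carrying the label $p$. Then $\Lambda^{+}$ has $\pin$ of out-degree~$2$, and H\"older's formula gives $g(\Lambda^{+})=p+2$, while $g(\Lambda)+g(\Lambda;\pin)=2+1=3$. No ``subset-by-subset matching'' can close this: raising the out-degree of $\pin$ from $k$ to $k+1$ changes its H\"older factor by $p^{k}$, a quantity that depends on the label, so the label-free right-hand side cannot absorb it. The same example (with $\Gamma=\{\qo\}$) already shows that the proposition itself is false under your reading of the splice.

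The source of the trouble is the orientation of the new arrow. Despite what the stated definition may suggest, the paper's proof (and the subsequent application to directed paths) requires $\qo$ to have \emph{out}-degree $0$ in $\Gamma$ and the new arrow to run $\qo\to\pin$. With that orientation the only altered factor in H\"older's formula is $h_M(\qo,\cdot)$, and since $\qo$ has acquired exactly one out-neighbour this factor equals $1$ when $\pin\in\tilpi$ and $0$ when $\pin\notin\tilpi$ --- independently of any label. The paper then splits on $\qo\in\tilpi$ (the new arrow is invisible, giving $g(\Gamma;\qo)\,g(\Lambda)=g(\Gamma)\,g(\Lambda)$, since every $\piga\not\ni\qo$ already has $S(\piga,\Gamma)=0$) and on $\qo\notin\tilpi$ with $\pin\in\tilpi$ (giving $g(\Gamma-\{\qo\})\,g(\Lambda;\pin)$), the remaining case vanishing. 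There is no residual pendant-vertex computation; the entire weight is carried by the out-degree-$0$ hypothesis on $\qo$, which is precisely what forces the altered factor to take only the values $0$ and~$1$.
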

\begin{proof}
	Let $M = \Gamma \rightarrow \Lambda.$ Given a subset $\tilpi$ of $M$, we define $\piga = \pi \cap \Gamma$ and $\pila = \pi \cap \Lambda$. For all $p \neq \qo$, we have $h_M(p, \tilpi) = h_\Gamma(p, \piga)$ if $p$ is in $\Gamma$, and $h_\Lambda(p, \pila)$ otherwise. We have three cases:

	If $\qo \in \piga$, we have $h_M(q, \piga) = h_\Gamma(q, \piga)$ and $h_M(p, \pila) = h_\Lambda(p, \pila)$ for all $q \in \Gamma - \piga$ and $p \in \Lambda - \pila$. It follows that $S(\tilpi, M) = S(\piga, \Gamma)S(\pila, \Lambda).$
	
	On the other hand, if $\qo\notin\piga$ and $\pin \in \pila$, we get $h_M(\qo, \tilpi) = 1$ because there is a unique edge $\qo \rightarrow \pin,$ so it contributes nothing to the product. Since $h_M(p, \tilpi) = h_\Lambda(p, \pila)$ for all $p \in \Lambda - \pila$, we have $S(\tilpi, M) = S(\piga, \Gamma - \{\qo\})S(\pila, \Lambda)$.
	
	Finally, if $\qo \notin \piga$ and $\pin\notin\pila,$ the previous analysis shows that $h_M(\qo, \tilpi) = 0$, which in turn implies that $S(\tilpi, M) = 0$, and thus $\tilpi$ contributes nothing to the sum.
	
	The subsets $\tilpi \subseteq M$ correspond bijectively to pairs $(\piga, \pila)$ with $\piga \subseteq \Gamma$ and $\pila \subseteq \Lambda.$ From the pairs with $\qo \in \piga$ we get $g(\Gamma)g(\Lambda)$. From the pairs with $\qo \notin \piga$, we need $\pin \in \pila,$ and the resulting terms combine to give precisely $g(\Gamma - \{\qo\})g(\Lambda; \pin).$
\end{proof}

\begin{cor}\thlabel{eustick} Suppose $\Gamma$ is a \hg{1} and $v \notin \Gamma$.\listspace
	\begin{enumerate} \listspace
		\item Fix a terminal vertex $q$ in $\Gamma$. Then $g(\Gamma \rightarrow v) = g(\Gamma) + g(\Gamma - \{q\}).$
		\item Fix an initial vertex $p$ in $\Gamma.$ Then $g(v \rightarrow \Gamma) = g(\Gamma) + g(\Gamma; p).$
	\end{enumerate}\textspace
\end{cor}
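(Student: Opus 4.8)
The plan is to obtain both identities as immediate specializations of \thref{eujoin}, taking in each case one of the two graphs in the splice to be the one‑vertex graph $\{v\}$. For part (a) I would apply \thref{eujoin} with the given $\Gamma$ on the left and $\Lambda=\{v\}$ on the right, so that $\qo=q$ is the chosen terminal vertex of $\Gamma$ and $\pin=v$ is the (forced, unique) vertex of $\Lambda$; the resulting graph is precisely $\Gamma\rightarrow v$. For part (b) I would reverse the roles, applying \thref{eujoin} with $\{v\}$ on the left and $\Gamma$ on the right: now $v$ is the vertex $\qo$ — it is terminal because it is isolated — and $\pin=p$ is the vertex of $\Gamma$ we attach to, so the resulting graph is $v\rightarrow\Gamma$.

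What then remains is to evaluate the small quantities that \thref{eujoin} produces, all of which follow at once from \thref{euholder}. For the one‑vertex graph, the subset $\varnothing$ gives the summand $\tfrac{v^{0}-1}{v-1}=0$ and the subset $\{v\}$ gives the empty product $1$, so $g(\{v\})=1$; the only subset containing $v$ is $\{v\}$ itself, so $g(\{v\};v)=1$; and the empty graph has the single subset $\varnothing$, whose summand is the empty product, so $g(\varnothing)=1$. Substituting into \thref{eujoin} would then give, in part (a),
\[
g(\Gamma\rightarrow v)=g(\Gamma)\,g(\{v\})+g(\Gamma-\{q\})\,g(\{v\};v)=g(\Gamma)+g(\Gamma-\{q\}),
\]
and, in part (b), using $\{v\}-\{v\}=\varnothing$,
\[
g(v\rightarrow\Gamma)=g(\{v\})\,g(\Gamma)+g(\varnothing)\,g(\Gamma;p)=g(\Gamma)+g(\Gamma;p).
\]

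Finally I would record why the hypotheses are there: in (a) the vertex $q$ must be terminal simply so that the splice $\Gamma\rightarrow v$ is defined, and adding the arrow $v\rightarrow q$ leaves the out‑degree of every old vertex unchanged (while $v$ itself gets out‑degree $1$), so $\Gamma\rightarrow v$ is again a \hg{1} for which $g$ of the shape is unambiguous; in (b) the new arrow is $p\rightarrow v$, which raises the out‑degree of $p$ by one, so $p$ must be initial for $v\rightarrow\Gamma$ to stay in that class. I do not expect any genuine obstacle: the only things needing care are keeping straight which of $v$ and the vertices of $\Gamma$ plays the role of $\qo$ and which plays $\pin$ in the two parts, and the routine evaluations $g(\{v\})=g(\varnothing)=g(\{v\};v)=1$.
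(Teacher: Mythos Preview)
Your proof is correct and matches the paper's (implicit) argument: the corollary is stated without proof immediately after \thref{eujoin}, and specializing one of the two graphs in the splice to the single vertex $\{v\}$, together with the routine evaluations $g(\{v\})=g(\{v\};v)=g(\varnothing)=1$, is exactly how it follows. Your closing remarks on why the hypotheses are needed slightly overreach---in this paper ``H\"older graph'' does not mean out-degree at most $1$, and \thref{eujoin} already allows $\pin$ to be any vertex of $\Lambda$---but this is peripheral and does not affect the proof.
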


We can iterate the first operation, starting with a graph $\Gamma_0$ and a fixed terminal vertex $q$ in it and define $\Gamma_{n + 1} = \Gamma_{n} \rightarrow v_{n + 1}$, where the $v_i$ are all distinct. If we let $\alpha = g(\Gamma_0 - \{q\})$ and $\beta = g(\Gamma_0)$, then the sequence $a_n = g(\Gamma_n)$ satisfies the recurrence relation $a_{n + 2} = a_{n + 1} + a_{n}$ with initial values $a_{-1} = \alpha$ and $a_0 = \beta$. Starting with a single vertex, we get

\begin{prop}\thlabel{eufibo}
	Let $\Phi_n$ be a directed path of $n$ vertices. Then $g(\Phi_n) = \fibo{n + 1}$, where $\fibo{n}$ is the Fibonacci sequence.
\end{prop}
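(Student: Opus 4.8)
The plan is to recognize $\Phi_n$ as one of the iterated graphs $\Gamma_m$ from the paragraph immediately preceding the statement, and then to read $g(\Phi_n)$ off the Fibonacci-type recurrence already established there. Start the iteration from $\Gamma_0 = \Phi_1$, a single (vacuously terminal) vertex $q$. Each step $\Gamma_{m+1} = \Gamma_m \to v_{m+1}$ attaches one new vertex to the terminal end of the path, so $\Gamma_m$ is a directed path on $m+1$ vertices, i.e.\ $\Gamma_m = \Phi_{m+1}$ for all $m \ge 0$. By the recurrence quoted there, $a_m := g(\Gamma_m)$ satisfies $a_{m+2} = a_{m+1} + a_m$ with $a_{-1} = g(\Gamma_0 - \{q\})$ and $a_0 = g(\Gamma_0)$.

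The remaining work is to evaluate the two seed values and line up the indices. Here $\Gamma_0 - \{q\}$ is the empty graph: its only subset is $\emptyset$, contributing the empty product, so $a_{-1} = 1$. For $\Gamma_0 = \Phi_1$, Hölder's formula has $\pi = \emptyset$ contribute $h(q,\emptyset) = (q^0-1)/(q-1) = 0$ and $\pi = \{q\}$ contribute the empty product $1$, whence $a_0 = 1$ (equivalently, this is just $g(1) = g(p) = 1$). A short induction then shows that the recurrence $a_{m+2} = a_{m+1} + a_m$ with $a_{-1} = a_0 = 1$ has solution $a_m = \fibo{m+2}$. Taking $m = n-1$ gives $g(\Phi_n) = g(\Gamma_{n-1}) = a_{n-1} = \fibo{n+1}$, as required.

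I do not expect a genuine obstacle: the entire argument collapses onto the recurrence already in hand, which is itself just iterated \thref{eustick}(a). The only things to watch are bookkeeping: confirming that every step of the iteration truly exposes a fresh terminal vertex, so that \thref{eustick}(a) — and hence the recurrence — applies at each stage, and tracking the Fibonacci offset through the two base cases. The single most delicate point is that $g$ of the empty graph (and of a lone vertex) equals $1$ rather than $0$; this is dictated by the empty-product convention in Hölder's formula, and getting it right is precisely what makes the indices match up.
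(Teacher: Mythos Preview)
The proposal is correct and follows exactly the approach the paper intends: the paper's entire argument is the clause ``Starting with a single vertex, we get'' immediately preceding the proposition, and you have simply filled in the base cases $a_{-1}=g(\varnothing)=1$, $a_0=g(\Phi_1)=1$ and tracked the index shift. Your caution about the empty-product convention and about a fresh terminal vertex appearing at each step is well placed but raises no actual difficulty.
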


An amusing consequence of this fact is this. Since $\Phi_{m + n} = \Phi_m \rightarrow \Phi_n$, and we have just seen that $g(\Phi_m - \{\text{final}\}) = \fibo{m}$ and $g(\Phi_n; \text{initial}) = \fibo{n}$, 
we can now apply \hyperref[eustick]{\hthref{eustick}} to obtain the well-known identity, \begin{enumrealm}
\begin{align*}
	\fibo{m + n} &= g(\Phi_m \rightarrow \Phi_{n - 1}) \\
	&= g(\Phi_m)g(\Phi_{n - 1}) + g(\Phi_m - \{\text{final}\})g(\Phi_{n - 1}; \{\text{initial}\}) \\
	&= \fibo{m + 1}\fibo{n} + \fibo{m}\fibo{n - 1}.
\end{align*}\end{enumrealm}
\vspace{-\baselineskip}
\subsection{Some lower bounds}
We have already seen that $g(\Phi_n)$ is the Fibonacci sequence, and so it grows exponentially with $n$. It follows that $g$ grows exponentially in the length of the longest directed path, since $g(\Gamma) \ge  g(\Gamma_0)$ for all subgraphs $\Gamma_0 \subseteq \Gamma$. It grows exponentially in the in- and out-degrees as well:
\begin{lem}\thlabel{euinout}
	Let $r, s$ denote the in- and out-degree, respectively, of a vertex $v$ in a \hg{1} $\Gamma$ with label $p$. Then $g(\Gamma) \ge 2^r + \frac{p^s - 1}{p - 1}$.
\end{lem}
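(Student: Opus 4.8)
The plan is to read the bound straight off \hold{1}'s formula (\hthref{euholder}). The key structural fact is that every summand $S(\pi,\Gamma)=\prod_{p\notin\pi}h_\Gamma(p,\pi)$ is a product of non-negative integers of the shape $1+p+\cdots+p^{k-1}$, hence is itself a non-negative integer, and it is at least $1$ exactly when $\pi$ is central. So it suffices to exhibit $2^r+1$ pairwise distinct subsets of $\Gamma$ whose summands already add up to at least $2^r+\tfrac{p^s-1}{p-1}$, and then discard every remaining (non-negative) term.

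For the first contribution, let $N^-$ be the set of $r$ in-neighbours of $v$; since a \hg{1} has no loops (as $p\equiv 1\Mod{p}$ is impossible), $v\notin N^-$. For each of the $2^r$ subsets $S\subseteq N^-$ put $\pi_S=\Gamma\setminus S$. Every vertex lying outside $\pi_S$ belongs to $S\subseteq N^-$ and hence has an edge to $v\in\pi_S$, so $\pi_S$ is central and $S(\pi_S,\Gamma)\ge 1$. These $2^r$ subsets are distinct and all contain $v$.

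For the second contribution, take $\pi_0=\Gamma\setminus\{v\}$. The only vertex outside $\pi_0$ is $v$, and all of its $s$ out-neighbours (being distinct from $v$) lie in $\pi_0$; hence $v(v,\pi_0)=s$ and $S(\pi_0,\Gamma)=h_\Gamma(v,\pi_0)=\tfrac{p^s-1}{p-1}$, a statement that stays correct when $s=0$, both sides then being $0$. Since $v\notin\pi_0$ while $v\in\pi_S$ for every $S$, the subset $\pi_0$ differs from all the $\pi_S$. Summing the contributions of these $2^r+1$ distinct subsets and dropping the rest gives $g(\Gamma)\ge 2^r+\tfrac{p^s-1}{p-1}$.

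There is no real obstacle here: the whole argument is bookkeeping about which subsets of $\Gamma$ are central. The only points needing a little care are the disjointness of the two families — which is why I track whether $v\in\pi$ — and the boundary cases $r=0$ and $s=0$, where the claimed quantities $2^0=1$ and $\tfrac{p^0-1}{p-1}=0$ fall out of exactly the same reasoning.
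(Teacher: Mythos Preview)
Your argument is correct and is essentially the paper's own proof: both exhibit the $2^r$ central subsets obtained by removing an arbitrary set of in-neighbours of $v$, together with the single subset $\Gamma\setminus\{v\}$ contributing $\tfrac{p^s-1}{p-1}$. The only cosmetic difference is that the paper first passes to the induced subgraph $\Gamma_0$ on $v$ and its neighbours and then invokes $g(\Gamma)\ge g(\Gamma_0)$, whereas you work directly in $\Gamma$; your handling of the boundary cases $r=0$ and $s=0$ is also slightly more explicit than the paper's.
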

\begin{proof}
	Suppose we have $\alpha_i \rightarrow v$ and $v \rightarrow \beta_j$ for $1 \le i \le r$ and $1 \le j \le s$, and let $\Gamma_0$ be the subgraph induced by the collection of vertices $\alpha_i$ and $\beta_j$. Evidently, any subset of $\Gamma_0$ containing $v$ and the $\beta_j$ is central, and there are $2^r$ such subsets. Moreover, the complement of $\{v\}$ in $\Gamma_0$ is central, and it contributes $\frac{p^s - 1}{p - 1}$. This shows that $\Gamma_0$, and thus $\Gamma$, satisfies the inequality.
\end{proof}	
% In the sequel, we consider generalized \hgs{1} of cube-free numbers. There are two groups of order $p^2$, namely $\cyc{p}^2 = \cyc{p} \times \cyc{p}$ and $\cyc{p^2}$. In both cases, we have an epimorphism $P \twoheadrightarrow \cyc{p}$, and thus from each semidirect product $G \rtimes \cyc{p}$, we get \textit{two} semidirect products $G \rtimes P$. On the other hand, we have a monomorphism $\aut{\cyc{p}} \hookrightarrow \aut{P}$ in both cases: For $P = \cyc{p} \times \cyc{p}$ by fixing one factor and acting on the other, and for $P = \cyc{p^2}$ from the well-known fact that $\aut{P} \cong \cyc{p} \times \cyc{p - 1}$. It follows that the same observation holds for $P \rtimes G$ as well. These two facts together imply the following
% \begin{lem}\thlabel{eubold}
	% If $n$ is cube-free with $s$ factors of the form $p^2$, then $g(n) \ge 2^s g(\operatorname{rad}(n))$.
% \end{lem}
% \begin{proof}
	% For a group $G$ of order $n_0 = \operatorname{rad}(n)$ and a subset $\tau \subset \sigma$, where $\sigma$ is the set of prime-squared factors of $n$, it can be seen that we can replace the $p$-Sylow subgroup of each $p$ in $\tau$ with either $\cyc{p} \times \cyc{p}$ or $\cyc{p}^2$, essentially since $G$ is an iterated semidirect product. Thus we can form at least $2^s$ different groups 
% \end{proof}

\section{Solving $g(n) = 6, 7$}
We call $n$ such that $g(n) = 6\text{ or }7$ \emph{admissible}. The analysis will be split into two parts, depending on the connectivity of $n$. From section 3.2 onward, we assume that $n$ is connected. We use the standard notation $\lambda(n) = \sum \alpha_i$ for $n = \ufd$. Under the assumption of connectivity, \hthref{euinout} restricts the possible factorizations of admissible $n$, once one recalls that $g(p^3) = 5$ and $g(p^2) = 2$.

\begin{lem}\thlabel{euclass}
	Suppose $n$ is admissible and connected. Then either $n$ is square-free, or $n$ is a divisor of a number of one of the forms $p^2 q^2 r, p^2 q r s, p^3 qr$. In particular, $\lambda(n) \le 5$.
\end{lem}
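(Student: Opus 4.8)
The strategy is to use \hthref{euinout} to bound the in-degree, out-degree, and path length of the generalized Hölder graph $\Gamma(n)$, then translate these graph-theoretic constraints into arithmetic constraints on the exponents $\alpha_i$. First I would record the numerical facts that will do the bookkeeping: $g(\Gamma) \ge g(\Gamma_0)$ for any subgraph $\Gamma_0$; a directed path on $n$ vertices contributes $F_{n+1}$ by \hthref{eufibo}, so a path on $5$ vertices already gives $F_6 = 8 > 7$, hence the longest directed path in $\Gamma(n)$ has at most $4$ vertices; and from \hthref{euinout}, a vertex of in-degree $r$ forces $2^r \le 7$, so $r \le 2$, while a vertex with label $p$ and out-degree $s$ forces $\frac{p^s-1}{p-1} \le 7$, so $s \le 2$ always, $s \le 1$ if $p \ge 3$ is impossible to rule out directly but $s = 2$ with $p = 2$ gives $3$ and with $p=3$ gives $4$, both fine, while $s \ge 3$ is impossible for any $p$. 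The key extra input for exponents is \hthref{eunnp}: if $n = \ufdsh$ is connected then $g(n) \ge g(p_1^{\alpha_1}) \cdots g(p_s^{\alpha_s}) + s - 1$, combined with $g(p^2) = 2$, $g(p^3) = 5$, $g(p^\alpha) \ge 14$ for $\alpha \ge 4$.

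Next I would bound $\lambda(n) = \sum \alpha_i$. Since $g(p^4) \ge 14 > 7$, no $\alpha_i \ge 4$, so each $\alpha_i \in \{1,2,3\}$. If some $\alpha_i = 3$, then by \hthref{eunnp} with that factor alone contributing $g(p^3) = 5$, any further prime factor $q$ adds at least a factor of $g(q) = 1$ but increases $s$, so $g(n) \ge 5 + (s-1)$; with $s = 3$ this is $\ge 7$, and we must check $s = 3$ is actually attainable (it is, e.g. $p^3qr$) and $s \ge 4$ gives $\ge 8$, impossible — so when a cube is present, $n \mid p^3 q r$ and $\lambda(n) \le 5$. If no $\alpha_i = 3$ but some $\alpha_i = 2$: with two squares, $g(n) \ge g(p^2)g(q^2) + (s-1) = 4 + (s-1)$, so $s \le 4$, and $s = 4$ gives exactly $7$ but would need $p^2 q^2 r s$ with $\lambda = 6$; here I need to argue more carefully — either this is genuinely excluded by a sharper bound, or the statement's list should be read so that $p^2q^2rs$ is subsumed (it is not, since $\lambda = 6$). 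So the real content is showing two squares plus two more primes is too big: e.g. via \hthref{euinout} or \hthref{eujoin} one shows a connected graph on such a vertex set forces $g \ge 8$. With at most one square the bound is $2 + (s-1)$, allowing $s$ up to $6$, so here I instead lean on the path-length and degree bounds: a connected graph with one "heavy" vertex ($p^2$, contributing the weak/strong arrow structure) and several degree-$1$ vertices is essentially a union of short paths glued at the heavy vertex, and \hthref{eujoin}/\hthref{eustick} with the Fibonacci growth forces few vertices — concretely $\lambda(n) \le 5$, matching $p^2qrs$.

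Finally I would assemble the case list. After establishing $\lambda(n) \le 5$ and that each exponent is at most $2$ except in the cube case, the remaining shapes with $\lambda(n) \le 5$ and the stated degree/path restrictions are: square-free ($\lambda \le$ whatever the square-free analysis allows — handled separately), $p^2 q^2 r$ ($\lambda = 5$), $p^2 q r s$ ($\lambda = 4$), $p^3 q r$ ($\lambda = 5$), and their divisors — and any connected $n$ with a repeated prime divides one of these three. The main obstacle I anticipate is the two-squares-plus-two-primes case $p^2q^2rs$: the crude \hthref{eunnp} bound gives exactly $7$, so it is not excluded by that alone, and I expect to need a direct computation — using \hthref{eujoin} to splice the square vertices and the pendant primes, or \hthref{euinout} applied to a well-chosen vertex — to show that connectivity forces at least one nontrivial semidirect-product interaction beyond the direct-product count, pushing $g(n) \ge 8$. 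The cube-plus-two-primes boundary ($p^3qr$ giving $\ge 5 + 2 = 7$, tight) is fine because it is included in the list rather than excluded, but I would double-check that $p^3 q r s$ ($g \ge 8$) and $p^3 q^2 r$ ($g(p^3)g(q^2) = 10$) are indeed ruled out, which they are by \hthref{eunnp}.
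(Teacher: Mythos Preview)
Your plan handles the easy reductions correctly (no fourth powers; at most one cube and then at most two further primes via \hthref{eunnp}; ruling out $p^3q^2$ and $p^3qrs$), but it misses the case that the paper treats as the \emph{only} nontrivial one: $n=p^2 q_1q_2q_3q_4$ with the $q_i$ distinct primes. Here \hthref{eunnp} gives only $g(n)\ge 2+4=6$, and your fallback---``lean on the path-length and degree bounds'' from \hthref{eufibo}, \hthref{euinout}, \hthref{eujoin}, \hthref{eustick}---does not work as stated. All of those lemmas are proved via H\"older's formula and therefore apply to \emph{square-free} H\"older graphs; a vertex labelled $p^2$ in the generalized graph is simply not covered by them. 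In particular, weak arrows $q_i\dashrightarrow p^2$ (that is, $q_i\mid p+1$) vanish when you pass to the square-free graph of $pq_1q_2q_3q_4$, so bounding the degree of ``the $p^2$ vertex'' via \hthref{euinout} is not a legal move. The sentence ``a connected graph with one heavy vertex and several degree-$1$ vertices is essentially a union of short paths \ldots\ forces few vertices'' is not a proof of anything here.

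The paper's argument for this case is short but genuinely different from what you sketch: write $n=p^2 n_0$ with $n_0=q_1q_2q_3q_4$ square-free; connectivity and \hthref{eunnp} force $g(n_0)\le 3$, and one then splits on $g(n_0)\in\{1,2,3\}$. Each value pins down the shape of the (honest, square-free) H\"older graph of $n_0$ enough to identify isolated $q_i$'s, which must then be joined to $p^2$; a brief count using multiplicativity and \hthref{eunnp} on a subfactor (e.g.\ $g(p^2q_1q_2)\ge 4$ when two isolated $q$'s are attached to $p^2$) yields $g(n)\ge 8$ in every case. You should replace the hand-wave with this case split. Two minor points: you write $\lambda(p^2qrs)=4$, but it is $5$; and the case you flag as the main obstacle, $p^2q^2rs$, is a legitimate loose end (\hthref{eunnp} gives only $7$), but it is not harder than, and is in fact analogous to, the $p^2q_1q_2q_3q_4$ case you skipped.
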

\begin{proof}
	The only part that needs proof is that $g(n) \ge 8$ for $n = p^2 n_0$ where $n_0 = q_1 q_2 q_3 q_4$ is squarefree. If not, then we need $g(n_0) \le 3$ by the connectivity of $n$. First, if $g(n_0) = 3$ then it can be seen that $n_0$ must have an isolated vertex $q$, which must therefore be connected with $p^2$, yielding $g(p^2 q)g(n_0 / q) + 1 \ge 10$ groups. Next, if $g(n_0) = 2$, it follows that $n_0$ has two isolated vertices $q_1$ and $q_2$. But then $g(p^2 q_1 q_2) \ge 4$ by \hthref{eunnp}, and consequently $g(n) \ge 2g(p^2 q_1 q_2) \ge 8$. Finally, $g(n_0) = 1$ means that all the vertices are connected with $p^2$, which is ruled out by \hthref{euinout}.
\end{proof}
\subsection{Disconnected numbers}
We first dispose of the disconnected case. Let $n = n_1 \cdots n_k m$ be the unique factorization of $n$ in the notation of \hthref{euufd}, with $m = 1$ and $n_i$ ordered such that $g(n_i)$ is non-decreasing. The \emph{primality} of $7$ shows that $k = 1$ when $g(n) = 7$, so it is enough to consider connected $n$ in this case. If $g(n) = 6$, either $k = 1$ or $g(n_1) = 2$ and $g(n_2) = 3$. For the sake of completeness, we briefly re-derive the solutions in $n_1$ and $n_2$ here.

First, $n_1$ is cube-free and has at most one square factor. If it has none, then it must have the graph \tikz[ww] \pic {walk};. If it has one, then $n_1 = p^2$. Next, if $n_2$ is square-free, \hthref{euinout} forces its graph to be \tikz[ww] \pic {twalk};. If it is not, then it is easy to see that it must be of the form $p^2 q$ with $q \dashrightarrow p^2$ and $p, q$ odd. \hthref{euppq} asserts that this is also a sufficient condition. We summarize the results:

\begin{prop}
	Suppose $n$ is cyclic free.
	\begin{enumerate}
		\item $g(n) = 2$ if and only if either $n = p^2$ or $n = pq$ where $q \equiv 1 \Mod{p}$.
		\item $g(n) = 3$ if and only if $n$ is odd, and either $n = p^2 q$ and $q \mid p + 1$, or $n = pqr$ where $q \equiv 1 \Mod{p}$ and $r \equiv 1 \Mod{q}$ but $r \not\equiv 1 \Mod{p}$.
	\end{enumerate}
\end{prop}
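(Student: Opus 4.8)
The plan is to reduce to connected $n$, cap $\lambda(n)$ using the non-nilpotent lower bound \hthref{eunnp}, and then read off the shape of the (generalized) Hölder graph in the few cases that survive.

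Since $n$ is cyclic-free, \hthref{euufd} writes $n = n_1\cdots n_k$ with the $n_i$ pairwise arithmetically independent, each connected with $g(n_i)\ge 2$; iterating \hthref{eufrobenius} gives $g(n) = \prod_i g(n_i)\ge 2^k$, so $g(n)\in\{2,3\}$ forces $k = 1$ and $n$ is connected. Now \hthref{eunnp} applies: with $n = p_1^{\alpha_1}\cdots p_s^{\alpha_s}$ we get $g(n)\ge \prod_i g(p_i^{\alpha_i}) + s - 1$. The product is at least any single factor $g(p_j^{\alpha_j})$, and $g(p^3) = 5$ while $g(p^\alpha)\ge 14$ for $\alpha\ge 4$; so every $\alpha_i\le 2$. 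Writing $t$ for the number of indices with $\alpha_i = 2$, the product is at least $2^t$, hence $2^t + s - 1\le g(n)$, and a short case check on $(s,t)$ with $\lambda(n) = s + t$ yields $\lambda(n)\le 2$ when $g(n) = 2$ and $\lambda(n)\le 3$ when $g(n) = 3$.

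Next I would settle the square-free cases. Here \hthref{euinout} constrains $\Gamma(n)$: a vertex of in-degree $\ge 2$ gives $g(n)\ge 4$, and a vertex of out-degree $\ge 2$ with label $p$ gives $g(n)\ge 1 + \frac{p^2-1}{p-1} = p + 2\ge 4$; so when $g(n)\le 3$ every vertex has in- and out-degree at most $1$. An edge $p\to q$ forces $q\equiv 1\Mod{p}$, hence $q > p$, so $\Gamma(n)$ is acyclic; being connected with all degrees $\le 1$ it is a directed path $\Phi_\ell$, and by \hthref{eufibo}, $g(\Phi_\ell) = \fibo{\ell+1}$, which is $2$ only for $\ell = 2$ and $3$ only for $\ell = 3$. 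The case $\ell = 2$ is $n = pq$ with $q\equiv 1\Mod{p}$. For $\ell = 3$, orient the path as $p\to q\to r$ (so $p < q < r$); the only edge the three vertices could carry besides the two path edges is $p\to r$, which is absent since $p$ already has out-degree $1$, so $\Gamma(n) = \Phi_3$ is equivalent to $q\equiv 1\Mod{p}$, $r\equiv 1\Mod{q}$, and $r\not\equiv 1\Mod{p}$. Oddness follows for free: the prime $2$ cannot be a non-initial vertex of the path (an edge $x\to 2$ would need $2\equiv 1\Mod{x}$), and if the source $p$ were $2$ then the odd prime $r$ would satisfy $r\equiv 1\Mod{2}$, reinstating the edge $p\to r$ and contradicting $\Gamma(n) = \Phi_3$.

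Finally the non-square-free cases, which are the main obstacle since Hölder's formula does not reach them. For $g(n) = 2$, $\lambda(n)\le 2$ together with a square factor forces $n = p^2$, and $g(p^2) = 2$. For $g(n) = 3$, $\lambda(n)\le 3$ with all $\alpha_i\le 2$ and a square factor leaves only $n = p^2 q$, whose generalized Hölder graph on the vertices $p^2$ and $q$ carries at least one arrow by cyclic-freeness; here I would invoke \hthref{euppq}, which classifies $g(p^2 q)$ and shows it equals $3$ exactly when $n$ is odd and the single arrow is the weak arrow $q\dashrightarrow p^2$, i.e.\ $q\mid p+1$ (for odd $q$ this already implies $q\nmid p-1$). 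The converse directions are then immediate: $g(p^2) = 2$ is classical, the paths $\Phi_2$ and $\Phi_3$ give $g = 2$ and $g = 3$ by \hthref{eufibo}, $g(p^2 q) = 3$ in the stated range is \hthref{euppq}, and every $n$ listed is visibly cyclic-free.
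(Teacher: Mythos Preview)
Your argument is correct and follows the same outline as the paper's brief re-derivation preceding the proposition: reduce to connected $n$, cap $\lambda(n)$, handle the square-free case via \hthref{euinout} and \hthref{eufibo}, and the remaining case $p^2q$ via \hthref{euppq}. You in fact supply considerably more detail than the paper's two-sentence sketch (e.g.\ the explicit use of \hthref{eunnp} and the oddness argument for $pqr$).

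One small omission to patch. As stated, \hthref{euppq} gives the formula for $g(p^2q)$ only when $p,q$ are \emph{odd}, together with the single even case $g(2p^2)=5$; it says nothing about $g(4q)$. So your sentence that \hthref{euppq} ``classifies $g(p^2q)$'' slightly overshoots, and the case $n=4q$ needs its own line---for instance, the four pairwise non-isomorphic groups $C_{4q}$, $C_2^2\times C_q$, $D_{4q}$, and the dicyclic group of order $4q$ already force $g(4q)\ge 4$. (The paper's own sketch sweeps this under ``it is easy to see''.) A minor terminological point: in this paper ``initial'' means out-degree $0$ (a sink), so your phrase ``non-initial vertex of the path'' in the oddness argument uses the word in the opposite sense; the logic is fine once one reads your ``initial'' as ``source''.
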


\subsection{Squarefree admissible numbers}
Let $i(\Gamma), o(\Gamma)$ denote the maximum in- and out-degree of a vertex in a \hg{1} $\Gamma$. If $o(\Gamma) = 3$, we get at least $1 + \frac{p^3 - 1}{p - 1} = p^2 + p + 2 \ge 8$. It follows that $o(\Gamma) \le 2$, and, for a similar reason, $i(\Gamma) \le 2$. We therefore have four cases to consider.

\case{1} $i(\Gamma) = 1, o(\Gamma) = 1$. In this case $\Gamma$ is simply a path $\Phi_k$ for some $k$, and as neither 6 nor 7 is a Fibonacci number, so by \hthref{eufibo} this is never an admissible graph.

\case{2} $i(\Gamma) = 2, o(\Gamma) = 1$. It is clear that there is a unique vertex $v$ with in-degree 2, so let $w_1, w_2 \rightarrow v$ be arrows, and call this subgraph $K$. We have $g(K) = 2^2 = 4$, and the only way to add edges is by adding vertices, since we have $o(\Gamma) = 1$. A new vertex $u$ can be connected to $w_1$ to get $u \rightarrow w_1$, this would give $g(K) + g(K; w_1) = 4 + 2 = 6$ groups by \hthref{eujoin}, so call this new graph $Q$. We cannot extend $Q$: Both extensions $b \rightarrow Q$ and $Q \rightarrow f$ give too many groups by \hthref{eustick}. In the other direction, we could add $v \rightarrow f_1$ in $K$ instead, yielding $g(K) + g(\tikz[ww] \pic{ewalk};) = 5$. However, this can only be extended forward, that is, by taking $(K \rightarrow f_1) \rightarrow f_2$, and this yields $g(K \rightarrow f_1) + g(K) = 4 + 5 = 9$ groups. Thus the only admissible graph is $Q$, with $g(Q) = 6$.

\case{3} $i(\Gamma) = 1, o(\Gamma) = 2$. It is clear that there is a unique vertex with out-degree 2, so label it $p$ for some prime $p$. We get at least $p + 2$ groups, so we have $p \le 5$ and therefore $p$ is initial (there can be no vertex labeled 2 because it would have an out-degree of at least 2). We cannot add edges without adding vertices. Using \hthref{eustick}, we see that adding a new vertex $v$ and an arrow $u \rightarrow v$, where $u$ is either of the two vertices connected with $p$, yields $p + 4$ groups. As $p > 2$, we must have $p = 3$. This completes the analysis for all such graphs: We get admissible graphs when $p = 5$ in the first sub-case and when $p = 3$ in the second sub-case, both yielding $g = 7$.

\case{4} $i(\Gamma) = 2, o(\Gamma) = 2$. 
A vertex labeled $p$ with in-degree and out-degree both equal to $2$ has $p > 2$, and would yield $4 + p + 1 = p + 5 > 7$ groups. So let $u$ be the unique vertex with out-degree $2$ and label it $p$, and assume we have $u \rightarrow v_1, v_2$, and call this graph $Q$. We have already seen that $g(Q) = p + 2$. We show that we cannot add a vertex $w$. First, we cannot connect it with $u$: An arrow $w \rightarrow u$ is impossible since it means $w = 2$, and $u \rightarrow w$ is ruled out by the condition on $o(\Gamma)$. Adding $w \rightarrow v_1$ does not give an admissible graph, because the subsets $\{v_1, v_2\}$ and $\{w, v_1, v_2\}$ would both be central, yielding $2(p + 1) = 2p + 2 \ge 8$ groups in total. Finally, if we add $v_1 \rightarrow w$ instead, we are back to Case $3$ with $p + 4 \ge 7$ groups, and it is now impossible to achieve $i(\Gamma) = 2$.

We can add an edge $v_1 \rightarrow v_2$ to $Q$ without adding $w$. This will give $p + 4$ groups, and is the only admissible case, giving $6$ if $p = 2$ and $7$ if $p = 3$.

\subsection{Small non-square-free admissible numbers}
In this section, we consider non-square-free admissible $n$ with $\lambda(n) \le 4$. Following {\cite{bettinafour1}}, we list special cases of formulae for $g(p^2 q^2), g(p^2 q r), g(p^2 q) \text{ and } g(p^3 q)$ and analyze them one by one. In their notation, $w_r(s)$ is defined to be 1 if $s \mid r$ and 0 otherwise. 

\begin{eufact}\thlabel{euppq}
	For all odd primes $p, q$, we have $g(2p^2) = 5$ and \[g(p^2 q) = 2 + \frac{q + 5}{2} w_{p - 1}(q) + w_{p + 1}(q) + 2w_{q - 1}(p) + w_{q - 1}(p^2).\] In particular, $g(p^2 q) = 3$ if and only if $q \mid p + 1$, and $g(p^2 q) = 4 + \frac{q + 1}{2}$ if $q \mid p - 1$.
\end{eufact}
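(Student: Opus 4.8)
The plan is to classify the groups of order $p^2q$ directly via Sylow theory, read off the count, and then specialize (taking $q=2$ for $g(2p^2)$, and the divisibility conditions for the ``in particular'' clauses). Throughout, $p\ne q$ are odd primes, and $P,Q$ denote Sylow $p$- and $q$-subgroups. I would first record the classical fact that one of $P,Q$ is always normal: $n_p\in\{1,q\}$, and $n_p=q$ forces $q\equiv 1\Mod{p}$, after which $n_q\in\{1,p,p^2\}$ can be neither $p$ (that would need $q\mid p-1$, incompatible with $p\mid q-1$) nor $p^2$ (that would need $q\mid p^2-1$, forcing $q=p+1$, impossible for odd primes), so $n_q=1$. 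The same elementary remarks show that for odd $p\ne q$ at most one of $q\mid p-1$, $q\mid p+1$, $p\mid q-1$ holds, so the subcases ``$P$ normal and $G$ nonabelian'' and ``$Q$ normal and $G$ nonabelian'' never overlap. It then suffices to count the nonabelian groups in each subcase and add the two abelian groups $C_{p^2}\times C_q$ and $C_p\times C_p\times C_q$.

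When $P\triangleleft G$, Schur--Zassenhaus gives $G=P\rtimes_\phi Q$, and since $Q\cong C_q$ is cyclic of prime order, the isomorphism types of such $G$ correspond to the trivial homomorphism together with the conjugacy classes of order-$q$ subgroups of $\aut{P}$ ($P$ being the unique Sylow $p$-subgroup, an isomorphism $G\cong G'$ restricts to it, and $\aut{C_q}$ is transitive on generators). If $P=C_{p^2}$, then $\aut{P}\cong C_{p(p-1)}$ has such a subgroup iff $q\mid p-1$, and then exactly one, contributing $w_{p-1}(q)$ nonabelian groups. If $P=C_p\times C_p$, then $\aut{P}\cong\gl{2}{p}$: for $q\mid p+1$ an order-$q$ element has no eigenvalue in $\mathbb{F}_p$, so it lies in a (cyclic) nonsplit maximal torus, whence all order-$q$ subgroups are conjugate --- one class; for $q\mid p-1$ an order-$q$ element is scalar or diagonalizable over $\mathbb{F}_p$, so its subgroup lies in a split maximal torus $T\cong C_{p-1}\times C_{p-1}$, whose order-$q$ subgroups are the $q+1$ lines of $\mathbb{F}_q^2$, and I would quotient by $N_G(T)/T$ (of order $2$, swapping coordinates): two lines are fixed, one of them scalar, so there are $(q-1)/2+2=(q+3)/2$ classes. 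Adding the $C_{p^2}$-contribution yields $1+(q+3)/2=(q+5)/2$ nonabelian groups when $q\mid p-1$ and $1$ when $q\mid p+1$, matching the coefficients of $w_{p-1}(q)$ and $w_{p+1}(q)$.

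When $Q\triangleleft G$ with $G$ nonabelian --- which, by the above, forces $p\mid q-1$ --- we have $G=Q\rtimes_\psi P$ with $\psi\colon P\to\aut{Q}\cong C_{q-1}$ nontrivial. If $P=C_{p^2}$ the image of $\psi$ is cyclic of order $p$, or, only when $p^2\mid q-1$, of order $p^2$; if $P=C_p\times C_p$ the image has order $p$. In each case I would check that any two admissible $\psi$ differ by an automorphism of $P$ (using that the relevant subgroups of the cyclic group $\aut{Q}$ are unique, and that $\gl{2}{p}$ is transitive on the pertinent surjections), so each possibility gives a single isomorphism type, and these are pairwise distinguished by the isomorphism type of the Sylow $p$-subgroup together with $|C_G(Q)|$. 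Thus the nonabelian count here is $2w_{q-1}(p)+w_{q-1}(p^2)$, completing the formula. For $g(2p^2)$ put $q=2$: then $n_p=1$ automatically, $\aut{C_{p^2}}$ has a unique involution, and $\gl{2}{p}$ has two conjugacy classes of involutions ($-I$ and a reflection), giving $2$ abelian and $3$ nonabelian groups, so $g(2p^2)=5$. Finally the ``in particular'' assertions fall out: the summands of the formula are non-negative and $\tfrac{q+5}{2}w_{p-1}(q)$ is $0$ or at least $4$, so the total is $3$ exactly when $q\mid p+1$ (and then $q\nmid p-1$ automatically, $q$ being odd), while $q\mid p-1$ gives $2+\tfrac{q+5}{2}=4+\tfrac{q+1}{2}$.

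The step I expect to cause the most trouble is the conjugacy classification of order-$q$ subgroups of $\gl{2}{p}$: proving that all are conjugate in the nonsplit case, and handling the order-$2$ Weyl action together with the scalar subgroup in the split case so as to land on exactly $(q+3)/2$ classes. Once the mutual exclusivity of the three divisibility conditions is established, everything else is bookkeeping. A shorter route would be to quote the general formula for $g(p^2q)$ from \cite{bettinafour1} and specialize, but the self-contained argument above seems worth including.
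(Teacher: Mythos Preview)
Your argument is correct. The classification via Sylow theory is the standard route to this formula, and each step you outline goes through: the normality dichotomy, the mutual exclusivity of the three divisibility conditions for odd $p\ne q$, the count of order-$q$ subgroups of $\gl{2}{p}$ up to conjugacy (your Weyl-group orbit count on the $q+1$ lines is right, with the two fixed lines being the scalar line and the ``antidiagonal'' $x=-y$, distinct since $q$ is odd), and the bookkeeping in the $Q$-normal case. The $g(2p^2)=5$ computation and the two specializations are also fine.

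The comparison with the paper is short: the paper does not prove this statement at all. It is recorded as a \emph{Fact} and attributed to \cite{bettinafour1}; the surrounding text says explicitly that these formulae are lifted from that reference and then analyzed. So your self-contained Sylow/semidirect-product derivation is strictly more than the paper offers here. What your approach buys is independence from the cited source and a transparent explanation of where each summand comes from; what the paper's approach buys is brevity, since the formula is classical and the paper's focus is on using it rather than rederiving it. Your closing remark already captures this trade-off accurately.

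One small point of presentation: when you write ``put $q=2$'' for $g(2p^2)$, it reads at first as though you intend to substitute into the displayed formula, which would be nonsense since $w_{p-1}(2)=w_{p+1}(2)=1$ for odd $p$. You immediately redo the argument correctly, so there is no mathematical error, but you might rephrase to make clear from the outset that the $q=2$ case is handled separately rather than by specialization.
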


\textit{Analysis.} We have $p \nmid q - 1$, since otherwise we would get at most $5$ groups. Thus we need $q \mid p - 1$, and this yields admissible $n$ precisely when $q = 3$, giving $6$ groups, and when $q = 5$, giving $7$ groups.

\begin{eufact}\thlabel{eupppq}
	If $p^3 q$ is admissible, then $p$ and $q$ are odd and
	$$\begin{aligned}
		g(p^3 q) = 5 &+ \frac{q^2 + 13q + 36}{6} w_{p - 1}(q) + (p + 5) w_{q - 1}(p) \\
		&+ \frac{2}{3} w_{q - 1}(3)w_{p - 1}(q) + w_{(p + 1)(p^2 + p + 1)}(q) (1 - w_{p - 1}(q)) \\
		&+ w_{p + 1}(q) + 2 w_{q - 1}(p^2) + w_{q - 1}(p^3).
	\end{aligned}$$
\end{eufact}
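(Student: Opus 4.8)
The assertion has two components: the closed formula, valid for odd primes $p \neq q$, and the claim that admissibility already forces $p$ and $q$ to be odd. The plan is to settle the parity claim by a short lower-bound argument, and to obtain the formula by the standard enumeration of the groups of order $p^3 q$ via Sylow's theorems and semidirect products, organised by which Sylow subgroup is normal.

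\emph{The parity condition.} Suppose $2 \in \{p, q\}$. If $q = 2$, then $n_p \mid 2$ and $n_p \equiv 1 \Mod p$ give $n_p = 1$, so every group of order $2p^3$ is $P \rtimes \cyc 2$ for one of the five groups $P$ of order $p^3$; each has $\abs{\aut P}$ even, hence contributes the direct product together with at least one nontrivial semidirect product, so $g(2p^3) \ge 10$. If $p = 2$, each of the five groups $P$ of order $8$ has $\cyc 2$ as a quotient while $\aut{\cyc q} = \cyc{q-1}$ has even order, so each $P$ admits a nontrivial action on $\cyc q$ and hence a nontrivial product $\cyc q \rtimes P$; with the five nilpotent groups this gives $g(8q) \ge 10$. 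Either way $g(p^3 q) > 7$, so an admissible $p^3 q$ has $p$ and $q$ odd.

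\emph{The formula.} Assume $p, q$ odd. A standard Sylow-counting argument shows $G$ has a normal Sylow subgroup; the only delicate point is to exclude $n_p = q$ together with $n_q \in \{p, p^2, p^3\}$, which for $n_q = p^3$ is impossible because the counts of $p$- and $q$-elements would exceed $p^3 q$, while $n_q \in \{p, p^2\}$ would force $q \mid p - 1$ or $q = p + 1$. I would then split into two cases, overlapping only in the five nilpotent groups. If $P \triangleleft G$, then $G = P \rtimes \cyc q$ and the isomorphism types correspond to conjugacy classes of subgroups of order $1$ or $q$ in $\aut P$; nontrivial ones appear exactly when $q$ divides $\abs{\aut P}$, i.e. always when $q \mid p - 1$, additionally when $q \mid p + 1$ (for $\cyc p^3$ and the exponent-$p$ group) or $q \mid p^2 + p + 1$ (for $\cyc p^3$). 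If $\cyc q \triangleleft G$, then, $\aut{\cyc q} = \cyc{q-1}$ being cyclic, $G = \cyc q \rtimes P$ is pinned down by the order and kernel of a map $P \to \cyc{q-1}$ up to $\aut P$, and this is nontrivial only when $p \mid q - 1$ (hence never when $q \mid p-1$). Running through the five $P$ and the divisibility relations: when $q \mid p-1$ all contributions are of the first type, and counting order-$q$ subgroups of $\aut P$ up to conjugacy gives, over $\cyc{p^3}$, the exponent-$p^2$ group, $\cyc{p^2}\times\cyc p$, the exponent-$p$ group and $\cyc p^3$ respectively, $1 + 1 + (q+1) + \tfrac{q+3}{2} + \big(\tfrac{q^2+4q+9}{6} + \tfrac{2}{3}w_{q-1}(3)\big) = \tfrac{q^2+13q+36}{6} + \tfrac{2}{3}w_{q-1}(3)$, the last summand being a Burnside count over $\gl 3 p$ whence the cube-root-of-unity correction; the $\cyc q \rtimes P$ of the second type with image $\cyc p$, $\cyc{p^2}$, $\cyc{p^3}$ give the terms $(p+5)w_{q-1}(p)$, $2w_{q-1}(p^2)$, $w_{q-1}(p^3)$; and the first-type contributions present because $q \mid p+1$ (one from the exponent-$p$ group) or $q \mid p^2+p+1$ (one from $\cyc p^3$) give $w_{p+1}(q)$ and $w_{(p+1)(p^2+p+1)}(q)\big(1 - w_{p-1}(q)\big)$, the factor $1 - w_{p-1}(q)$ removing the single overlap with the first bundle, which occurs precisely for $q = 3 \mid p - 1$ (since then automatically $3 \mid p^2+p+1$).

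\emph{Main obstacle.} The heaviest step is the orbit-counting for the two nonabelian groups of order $p^3$: one must describe $\aut$ of the exponent-$p$ group precisely enough to see that it surjects onto $\gl 2 p$ acting on the Frattini quotient (hence is transitive on maximal subgroups, yielding $\tfrac{q+3}{2}$ classes in the diagonalisable case $q \mid p-1$ and one class in the irreducible case $q \mid p+1$), and $\aut$ of the exponent-$p^2$ group well enough to see that exactly one of its $p+1$ maximal subgroups is noncyclic and characteristic while the other $p$ form a single orbit; one then checks that the resulting semidirect products are pairwise non-isomorphic and disjoint from those on the $\cyc q \triangleleft G$ side. Bookkeeping these overlaps — in particular tracing the one coincidence that forces the factor $1 - w_{p-1}(q)$ — is what makes the final expression so ornate. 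Alternatively, the formula is the specialization to odd $p, q$ of the general formula for $g(p^3q)$ in \cite{bettinafour1}, and may simply be quoted.
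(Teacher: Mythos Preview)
The paper offers no proof here: the statement is labelled a Fact and is simply quoted as a special case of the general $g(p^3q)$ formula in \cite{bettinafour1}, exactly the alternative you mention in your last sentence. Your from-scratch sketch therefore goes well beyond what the paper does; its architecture---split by which Sylow subgroup is normal, then run through the five $P$---is sound, your parity argument is correct, and the $q\mid p-1$ counts $1,\,1,\,q{+}1,\,\tfrac{q+3}{2},\,\tfrac{q^2+4q+9}{6}+\tfrac23 w_{q-1}(3)$ are right.

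One step, however, is more delicate than your ``Main obstacle'' paragraph indicates. For the exponent-$p^2$ group $M=M_{p^3}$ on the $\cyc q\triangleleft G$ side, knowing that the $p+1$ maximal subgroups fall into two $\aut M$-orbits is \emph{not} sufficient: isomorphism classes of $\cyc q\rtimes M$ with $p\mid q-1$ correspond to $\aut M$-orbits on nonzero homomorphisms $M\to\cyc p$, not merely on their kernels, and here the two counts differ. The point is that the image of $\aut M$ in $\gl 2 p$ (acting on the Frattini quotient) is not the full stabiliser of the characteristic line but only the subgroup $\bigl\{\begin{smallmatrix} i & 0\\ j & 1\end{smallmatrix}\bigr\}$, because the defining relation $b^{-1}ab=a^{1+p}$ forces every automorphism to fix $b$ modulo $\Omega_1(M)$. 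Under this smaller group the nonzero functionals break into $p$ orbits (one with $\beta=0$ and one for each nonzero value of $\beta$), so $M$ contributes $p$ groups, not two. The breakdown over $\cyc{p^3},\,\cyc{p^2}\times\cyc p,\,\cyc p^3$, the Heisenberg group, and $M$ is $1+2+1+1+p=p+5$; the route you describe would yield $1+2+1+1+2=7$ and miss the term. With this correction your derivation goes through.
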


\textit{Analysis.} If $p \mid q - 1$, we have at least $p + 10$ groups. It is also clear that we cannot have $q \mid p - 1$. We therefore have $g(p^3 q) = 5 + w_{p + 1}(q) + w_{(p + 1)(p^2 + p + 1)}(q)$. The last summand is $1$, and thus $g(p^3 q) = 6$ if $q \mid (p^2 + p + 1)$ and $g(p^3 q) = 7$ if $q \mid p + 1$.

\begin{eufact}\thlabel{euppqq}
	If $p < q$ and $p^2 q^2$ is admissible, then $p$ and $q$ are odd and
	\[g(p^2 q^2) = 4 + \frac{p^2 + p + 4}{2} w_{q - 1}(p^2) + (p + 6)w_{q - 1}(p) + 2w_{q + 1}(p) + w_{q + 1}(p^2).\]
\end{eufact}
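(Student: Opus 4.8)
The plan is to read off the formula from the structure theory of groups of order $p^2q^2$ — which for $p<q$ is governed by Sylow theory and semidirect products — thereby reproducing the count of \cite{bettinafour1}, and to invoke admissibility only to exclude $p=2$. First I would set up the structural reduction. Since $p<q$, the number $n_q$ of Sylow $q$-subgroups divides $p^2$ and is $\equiv 1 \pmod q$, so $n_q>1$ forces $q\mid p^2-1$, hence (as $q>p$) $q\mid p+1$, i.e.\ $(p,q)=(2,3)$. Outside that single pair the Sylow $q$-subgroup $Q$, of order $q^2$, is normal, so every group of order $p^2q^2$ is a semidirect product $Q\rtimes P$ with $|P|=p^2$; the exceptional case is $n=36$, which has $g(36)=14>7$ and so is not admissible and may be discarded.

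Next I would run the semidirect-product count over the four shapes $Q\in\{\cyc{q^2},\,\cyc q\times\cyc q\}$ and $P\in\{\cyc{p^2},\,\cyc p\times\cyc p\}$, counting orbits of the set of homomorphisms $P\to\aut Q$ under the natural action of $\aut P\times\aut Q$ (precomposition and conjugation). The two cases with $Q=\cyc{q^2}$ use $\aut{\cyc{q^2}}\cong\cyc{q(q-1)}$, whose $p$-part is cyclic of order $p^{1+v_p(q-1)}$, so a nontrivial action needs $p\mid q-1$. The two cases with $Q=\cyc q\times\cyc q$ use $\aut{\cyc q\times\cyc q}\cong\gl2q$, and here one needs the conjugacy classes of the relevant $p$-subgroups of $\gl2q$ — cyclic of order dividing $p^2$, or elementary abelian of rank at most $2$, split into scalar, split-diagonal and (when $p\mid q+1$) irreducible types — together with, when $P=\cyc p\times\cyc p$, the number of $\aut{\cyc p\times\cyc p}$-orbits of surjections onto each. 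Collecting the four kinds of contribution yields the four $w$-terms: the order-$p^2$ actions contribute $\tfrac{p^2+p+4}{2}\,w_{q-1}(p^2)$, the order-$p$ actions contribute $(p+6)\,w_{q-1}(p)$, and the irreducible actions on $\cyc q\times\cyc q$, which exist precisely when $p\mid q+1$, contribute $2w_{q+1}(p)+w_{q+1}(p^2)$; adding the four abelian groups reproduces the stated formula, the full orbit count being that of \cite{bettinafour1}.

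It then remains to justify that an admissible $p^2q^2$ has $p$, and hence $q$, odd. If $p=2$ then $q$ is odd, and the count above already exhibits at least eight groups of order $4q^2$: the four abelian groups, together with the four pairwise non-isomorphic non-abelian groups $\cyc{q^2}\rtimes\cyc4$ (dicyclic type), $(\cyc{q^2}\rtimes\cyc2)\times\cyc2$, $(\cyc q\times\cyc q)\rtimes_{-I}\cyc4$, and $((\cyc q\times\cyc q)\rtimes_{\operatorname{diag}(1,-1)}\cyc2)\times\cyc2$. Hence $p\ge3$, and then $q>p$ forces $q$ odd as well, which is exactly the regime in which \cite{bettinafour1} gives the displayed formula.

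I expect the main obstacle to be precisely the $\gl2q$ bookkeeping in the two cases $Q=\cyc q\times\cyc q$: one must classify up to conjugacy the subgroups of order $p$ and $p^2$ — distinguishing scalar, split and non-split/irreducible types, and noting that a non-semisimple $p$-element of $\gl2q$ has order divisible by $q$ and so never occurs — and then count the surjections of $\cyc p\times\cyc p$ onto them modulo $\aut{\cyc p\times\cyc p}$. This is the step that produces the non-obvious coefficient $\tfrac{p^2+p+4}{2}$ and where an over- or undercount is easiest; the cyclic-automorphism cases, the abelian groups, and the Sylow reduction are all routine.
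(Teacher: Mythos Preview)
The paper does not prove this statement at all: it is listed as a \emph{Fact}, cited from \cite{bettinafour1}, and only the subsequent \emph{Analysis} (which concerns when the formula yields $6$ or $7$) is carried out. Your proposal therefore goes well beyond what the paper does, sketching an actual derivation via the Sylow reduction $Q\trianglelefteq G$ (with the single exception $n=36$, correctly discarded as inadmissible) followed by an orbit count of homomorphisms $P\to\aut Q$. That outline is sound, and your argument that admissibility forces $p$ odd---exhibiting eight explicit groups of order $4q^2$---is correct and self-contained.

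One slip to fix before carrying out the count: for $p\neq q$ the $p$-part of $\aut{\cyc{q^2}}\cong\cyc{q(q-1)}$ has order $p^{v_p(q-1)}$, not $p^{1+v_p(q-1)}$. This matters, since it is exactly what distinguishes the $w_{q-1}(p)$ contribution from the $w_{q-1}(p^2)$ contribution in the cyclic-$Q$ cases. With that corrected, the bookkeeping you describe (scalar/split/irreducible $p$-subgroups of $\gl2q$, orbits under $\aut P\times\aut Q$) is indeed the standard route to the displayed coefficients, and your identification of the $\gl2q$ case analysis as the delicate step is accurate.
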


\textit{Analysis.} It is clear that we need $p \nmid q - 1$, so $p \mid q + 1$. This gives 6 or 7 groups, depending on whether $p^2 \mid q + 1$ as well or not.

\begin{eufact} \thlabel{euppqr}
	If $n = p^2 q r$ is admissible with $q < r$, then $q > 2$ and $g(n) = h(n) + k(n)$ where$$\begin{aligned}
		h(n) &= 2 + w_{p^2 - 1}(qr) + 2w_{r - 1}(pq) + w_{r - 1}(p)w_{p - 1}(q) + w_{r - 1}(p^2 q) \\ 
		&+ w_{r - 1}(p)w_{q - 1}(p) + 2w_{q - 1}(p) + 3w_{p - 1}(q) + 2w_{r - 1}(p) \\ 
		&+ 2w_{r - 1}(q) + w_{r - 1}(p^2) + w_{q - 1}(p^2) + w_{p + 1}(r) + w_{p + 1}(q), \\
		k(n) &= \frac{qr + 1}{2} w_{p - 1}(qr) + \frac{r + 5}{2} w_{p - 1}(r)(1 + w_{p - 1}(q))\\
		&+ (p^2 - p)w_{q - 1}(p^2)w_{r - 1}(p^2) \\
		&+ (p - 1)(w_{q - 1}(p^2)w_{r - 1}(p) + w_{r - 1}(p^2)w_{q - 1}(p) + 2w_{r - 1}(p)w_{q - 1}(p)) \\
		&+ \frac{(q - 1)(q + 4)}{2} w_{p - 1}(q)w_{r - 1}(q) \\
		&+ \frac{q - 1}{2} (w_{p + 1}(q)w_{r - 1}(q) + w_{p - 1}(q) + w_{p - 1}(qr) + 2w_{r - 1}(pq)w_{p - 1}(q)).
	\end{aligned}$$
\end{eufact}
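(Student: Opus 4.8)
The identity is the specialization to the admissible range of the general formula for $g(p^2qr)$ proved in {\cite{bettinafour1}}; what I outline is how that count is organized and why, in the admissible range, it collapses to $h(n) + k(n)$. First, when $g(n) \le 7$ every group $G$ of order $n = p^2qr$ is solvable: a non-abelian simple group of such an order would force $n = 60$ (by elementary Sylow counting, or the classification of finite simple groups), and $g(60) = 13$. So fix a solvable $G$ of order $n$; it has an abelian Sylow $p$-subgroup $P$ (isomorphic to $\cyc{p^2}$ or to $\cyc{p}^2$), cyclic Sylow subgroups $\cyc{q}$ and $\cyc{r}$, and Hall subgroups for every subset of $\{p,q,r\}$. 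The plan is to stratify the isomorphism types of $G$ by which of the three cyclic factors act nontrivially on which of the others, equivalently by the ``active'' part of the generalized \hg{1} $\Gamma(n)$ on the vertices $p^2, q, r$, keeping track for each arrow of whether it is weak or strong. An arrow $a \to b$ is present exactly when the relevant factor admits a nontrivial action, and weak-versus-strong records whether $q \equiv 1 \Mod{p^2}$ or only $q \equiv 1 \Mod{p}$, whether $q \mid p-1$ or only $q \mid p+1$, and so on; these are precisely the events detected by $w_{p^2-1}(qr)$, $w_{p-1}(q)$, $w_{p+1}(q)$, $w_{q-1}(p)$, $w_{q-1}(p^2)$ and their products.

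For each such configuration the count splits into a ``separated'' contribution and a ``combined'' one. When at most one of the three factors acts nontrivially, $G$ either splits as a direct product of a group on the square-free part $qr$ (whose count comes from \hthref{eujoin} and \hthref{eustick}) with $P$, or is a single semidirect product with cyclic image; summing these over all separated configurations --- with the extra care that $P$ carries a choice of isomorphism type and, when it is the passive factor, a choice of kernel inside $\aut{P}$ --- produces the fourteen summands collected in $h(n)$, the constant $2$ being $\cyc{n}$ together with the near-abelian baseline forced by connectivity, in the style of the analysis following \hthref{euppq}. When two actions are simultaneously nontrivial --- for instance $\cyc{q}$ and $\cyc{r}$ both acting on $P$, or $\cyc{q}$ acting on $P$ and on $\cyc{r}$, or a chain $r \to q \to p^2$ --- the isomorphism type is pinned down by a configuration of prime-order subgroups, and one counts orbits: pairs of order-$q$ and order-$r$ subgroups of $\aut{P}$, which for $P \cong \cyc{p}^2$ is $\gl{2}{p}$ of order $(p^2-1)(p^2-p)$, producing the polynomial coefficients $(p^2-p)$ and $(p-1)$; or cyclic subgroups of $\cyc{p-1}$, $\cyc{q-1}$, $\cyc{r-1}$ together with a compatible action, producing $\frac{q-1}{2}$, $\frac{(q-1)(q+4)}{2}$, $\frac{r+5}{2}$ and $\frac{qr+1}{2}$. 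These are exactly the summands of $k(n)$.

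The main obstacle is the bookkeeping needed to make ``$g(n) = h(n) + k(n)$'' an honest partition rather than an overcount: a single isomorphism type can be exhibited as an iterated extension in several orders. I would handle this by fixing, for each $G$, a chief series adapted to the active subgraph (available since $G$ is solvable) and counting $G$ by the $\aut$-conjugacy class of the action induced at each step, so that every type is counted once. The factor $(1 + w_{p-1}(q))$ multiplying the $w_{p-1}(r)$-term in $k(n)$ is precisely the residue of this inclusion--exclusion: when the arrow $q \to p^2$ is also present, a parallel family of groups in which $\cyc{q}$ acts nontrivially on $P$ appears alongside the family counted when only $\cyc{r}$ does, so that contribution is doubled. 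With the count arranged this way, matching the resulting sum to the stated closed form reduces to a finite verification of orbit-counting identities in $\gl{2}{p}$ and in the cyclic groups $\cyc{p-1}$, $\cyc{q-1}$, $\cyc{r-1}$. Finally, $q > 2$ and the vanishing of the many $w$-monomials absent from the admissible range follow from \hthref{euinout} and \hthref{euclass}, exactly as in the analyses of \hthref{euppq}, \hthref{eupppq} and \hthref{euppqq}: for example, if $q = 2$ then $p$ is odd, so $2 \mid p^2 - 1$ and an arrow joins the vertex $2$ to $p^2$, which together with the degree bounds already forces $g(n) \ge 8$.
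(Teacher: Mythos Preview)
The paper does not prove this statement. It is stated as a \emph{Fact} and introduced at the top of Section~3.3 with ``Following \cite{bettinafour1}, we list special cases of formulae for $g(p^2 q^2), g(p^2 q r), g(p^2 q)$ and $g(p^3 q)$ and analyze them one by one.'' In other words, the formula is imported wholesale from the literature; the paper's contribution is the subsequent \emph{Analysis} paragraph, which applies the formula rather than deriving it.

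Your proposal is therefore not in competition with any argument in the paper. As a sketch of how the cited formula is obtained, it is broadly accurate in spirit: the derivation in \cite{bettinafour1} does proceed by exploiting solvability, stratifying by which Sylow and Hall subgroups act nontrivially on which, and counting $\operatorname{Aut}$-orbits of the resulting actions, with the $\gl{2}{p}$-orbit counts producing the polynomial coefficients in $k(n)$. But what you have written is an outline, not a proof: phrases such as ``I would handle this by\ldots'' and ``reduces to a finite verification of orbit-counting identities'' are exactly where the real work sits, and none of it is carried out. Matching each monomial in $h(n)$ and $k(n)$ to a specific family of groups, and checking that the families partition the isomorphism types without overlap, is a substantial case analysis that your sketch does not perform.

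One concrete gap: your justification of $q > 2$ does not go through as stated. \hthref{euinout} is a statement about square-free \hgs{}, and the bound it gives for the vertex $2$ (in-degree $0$, out-degree at most $2$) is far below $8$. The actual exclusion of $q = 2$ requires either invoking the full formula from \cite{bettinafour1} in the even case, or a direct count showing $g(2p^2 r) \ge 8$ whenever $2p^2 r$ is connected (for instance via $g(2p^2) = 5$ together with the extra groups forced by the arrow at $r$); neither is supplied.
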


\textit{Analysis.} This case is more complicated, so we divide the proof into several steps. We begin by applying the formula to the graph $D(p)$ with $p^2 \dashrightarrow q, r$: It can be seen that $g(D(p)) = 2p + 5 \ge 9$.

We see that $p \neq 2$, since otherwise we would have $D(2)$ as a subgraph. Next, we show that there is no strong arrow to $p^2$. By \hthref{euppq} we see that there is no $r \rightarrow p^2$ since $r \ge 5$. If we have $q \rightarrow p^2$, we get $g(p^2 q) = 6$, and the connectivity of $n$ forces one of $w_{r - 1}(p), w_{r - 1}(q)$ and $w_{p + 1}(r)$ to be equal to $1$. The formula then implies that we get
$2(p - 1)$, $\frac{(q - 1)(q + 4)}{2}$, \text{or} $w_{p^2 - 1}(qr) + w_{p + 1}(r)$
more groups, respectively, showing that $n$ is inadmissible. 

Observe that $h$ depends only on the shape of the generalized graph of $n$, and not on the magnitudes of its prime factors. Consequently, if $k(n) = 0$, we can calculate $g(n)$ for all $n$ with a specific graph simply by calculating $g(n)$ for a single representative, using the Cubefree package in GAP{\cite{GAP4, cubefreepkg}}. We will call $n$ ``regular'' in this case as well.

We analyze $k(n)$. If there is an arrow $q \rightarrow r$, then we have $h(n) \ge 2 + 2w_{r - 1}(q) = 4$, and if there is no arrow then both $q$ and $r$ are connected with $p^2$. A case-by-case analysis shows that $h(n) \ge 5$ in this case. Consequently, we must have $k(n) \le 3$ in all admissible cases, with equality implying the existence of an arrow $q \rightarrow r$.

Assume now that $k(n) > 0$, that is, $n$ is not regular. By considering the ``coefficients'' of the $w$-sums in the formula for $k$, we observe that all of them are greater than $3$ except possibly $p - 1$ and $\frac{q - 1}{2}$. First, the sum whose coefficient is $(p - 1)$ must vanish, as otherwise would get $D(p)$ as a subgraph.

Secondly, we consider $\frac{q - 1}{2}$. Our analysis shows that all the summands but $w_{p + 1}(q)w_{r - 1}(q)$ must vanish. Let $\m1(q)$ be the graph with $q \rightarrow r$ and $q \dashrightarrow p^2$ (we emphasize its dependence on $q$). We see that $g(\m1(q)) = 5 + \frac{q - 1}{2}$, thus we get admissible $n$ precisely when $q = 3$ or $q = 5$, yielding $6$ and $7$ groups, respectively, and this is the only irregular admissible case.

\subsubsection*{Regular \hgs{1} with two edges}
Having dealt with the ``irregular'' cases, we consider regular $n$ with two edges in their graphs, and calculate $g$ for a representative using GAP. It is convenient to list $\m1(q)$ as well.
\vspace*{-\baselineskip}
\begin{center}

\resizebox{\textwidth}{!}{%
\begin{tikzpicture}
	
		% [SECOND ROW]

		% Figure M1(q)
		\begin{scope}[shift={(0, 0)}]
			\node[point, label=left:$p^2$] (p2) {};
			\node[point, label=left:$q$, below=of p2] (q2) {};
			\node[point, label=right:$r$] at ($(q2) + (22.5:2)$)(r2) {};
			\node at ($(r2) + (230:1.6)$) {$\m1(q) : g = 5 + \frac{q - 1}{2}$};
		\end{scope}
		
		% Figure LAMBDA 6 - I
		\begin{scope}[shift={(5, 0)}]
			\node[point, label=left:$p^2$] (p1) {};
			\node[point, label=left:$q$, below=of p1] (q1) {};
			\node[point, label=right:$r$] at ($(q1) + (22.5:2)$)(r1) {};
			\node at ($(r1) + (230:1.6)$) {$\boxed{\slame: \mathbf{g(1827) = 6}}$};
		\end{scope}
		
		% Figure LAMBDA 6 - II
		\begin{scope}[shift={(10, 0)}]
			\node[point, label=left:$p^2$] (p7) {};
			\node[point, label=left:$r$, below=of p7] (q7) {};
			\node[point, label=right:$q$] at ($(q7) + (22.5:2)$)(r7) {};
			\node at ($(r7) + (230:1.6)$) {$\boxed{\slamz : \mathbf{g(7575) = 6}}$};
		\end{scope}
		
		% Figure LAMBDA 7
		\begin{scope}[shift={(15, 0)}]
			\node[point, label=left:$p^2$] (plast) {};
			\node[point, label=left:$q$, below=of plast] (qlast) {};
			\node[point, label=right:$r$] at ($(qlast) + (22.5:2)$) (rlast) {};
			\node at ($(rlast) + (230:1.6)$) {$\boxed{\hlame : \mathbf{g(32661) = 7}}$};
		\end{scope}

		% [FIRST ROW]

		% Figure LAMBDA 5 - I
		\begin{scope}[shift={(0, 3)}]
			\node[point, label=left:$p^2$] (p3) {};
			\node[point, label=left:$q$, below=of p3] (q3) {};
			\node[point, label=right:$r$] at ($(q3) + (22.5:2)$)(r3) {};
			\node at ($(r3) + (230:1.6)$) {$\qlame : \mathbf{g(101695) = 5}$};
		\end{scope}		
		
		% Figure LAMBDA 5 - II
		\begin{scope}[shift={(5, 3)}]
			\node[point, label=left:$p^2$] (p6) {};
			\node[point, label=left:$q$, below=of p6] (q6) {};
			\node[point, label=right:$r$] at ($(q6) + (22.5:2)$)(r6) {};
			\node at ($(r6) + (230:1.6)$) {$\qlamz: \mathbf{g(12615) = 5}$};
		\end{scope}
		
		% Figure LAMBDA 5 - III
		\begin{scope}[shift={(10, 3)}]
			\node[point, label=left:$p^2$] (p4) {};
			\node[point, label=left:$r$, below=of p4] (q4) {};
			\node[point, label=right:$q$] at ($(q4) + (22.5:2)$)(r4) {};
			\node at ($(r4) + (230:1.6)$) {$\qlamd : \mathbf{g(825) = 5}$};
		\end{scope}

		% Figure EXTREME
		\begin{scope}[shift={(15, 3)}]
			\node[point, label=left:$p^2$] (p5) {};
			\node[point, label=left:$q$, below=of p5] (q5) {};
			\node[point, label=right:$r$] at ($(q5) + (22.5:2)$)(r5) {};
			\node at ($(r5) + (230:1.6)$) {$\m{2}: g(12425) = 8$};
		\end{scope}

		\path[->, thick] (q2) edge[dashed] (p2) (q2) edge (r2)
		(q6) edge[dashed]  (p6) (r6) edge[dashed] (p6) (r4) edge[dashed] (p4) (r7) edge[dashed] (p7);
		\path[->, thick] (q1) edge (r1) (q3) edge (r3) (q5) edge (r5);
		\path[->, thick] (plast) edge (qlast);
		\path[->, thick] (qlast) edge (rlast);
		\path[->, thick] (r3) edge[dashed] (p3) (p5) edge[dashed] (r5);
		\path[->, thick] (p1) edge[dashed] (q1);
		\path[->, thick] (p4) edge[dashed] (q4);
		\path[->, thick] (p7) edge (q7);
\end{tikzpicture}
}
\end{center}
\subsubsection*{Regular \hgs{1} with three edges}
A new edge is to be added to an admissible graph with two edges and $g \le 6$, so the candidates are $\qlame, \qlamz, \qlamd, \slame$ and $\slamz$. Because the graph is acyclic and $r > q$, the direction of the edge is uniquely determined. We now study the effect of adding an edge to these five graphs:
\begin{itemize}
	\item $\qlame$: We get at least $w_{p^2 - 1}(qr) + w_{p + 1}(q) + \frac{q - 1}{2}w_{p + 1}(q)w_{r - 1}(q) \ge 3$ additional groups.
	\item $\qlamz$: We get the same graph from the previous case.
	\item $\slame$: We get $D(p)$.
	\item $\qlamd$ and $\slamz$: We get at least $2w_{r - 1}(pq) + 2w_{r - 1}(q) = 4$ additional groups.
\end{itemize} \nopagebreak[4]
We conclude that we cannot get an admissible graph with three edges.

\subsection{Large non-square-free admissible numbers} 
Finally, it remains to handle non-square-free admissible $n$ with $\lambda(n) = 5$. There are three cases to consider.

\case{A} $n = p^3 q r$.
We immediately see that $q$ and $r$ are not related, since otherwise we would get at least $g(p^3)g(qr) = 10$ groups. We must have $g(p^3 q) \ge 6$, and by connectivity, we have $g(p^3 q) = 6$ and $r \mid p^\alpha - 1$ for some $1 \le \alpha \le 3$. The case that yields the least number of groups is $r \mid p^3 - 1$ with $r \nmid p^2 - 1$, and in this case we get $2^2 = 4$ groups of the form $\cyc{p}^3 \rtimes \cyc{qr}$ and 4 groups of the form $P \times \cyc{qr}$, where $P$ is any group of order $p^3$ different from $\cyc{p}^3$. This gives a total of 8 groups, proving that $n$ is inadmissible.

\case{B} $n = p^2 q^2 r$.
We must have $g(p^2 r) = g(q^2 r) = 3$, and therefore arrows $r \dashrightarrow p^2, q^2$. It is easy to see that there are $3^2 = 9$ groups of the form $(P \times Q) \rtimes \cyc{r}$, simply by observing that for each $P$ we can choose either $\cyc{p^2}\text{ or }\cyc{p}^2$, and in the latter case, we can let $\cyc{r}$ act on it or not. Once again, $n$ is inadmissible.
 
\case{C} $n = p^2 q r s$. It is clear that $qrs$ has at most two edges, because otherwise we have $g(qrs) \ge 4$ by Hölder's formula and thus $g(n) \ge 9$.

\scase{C.1} \emph{$qrs$ has precisely one edge $q \rightarrow r$.} We claim that $n$ is inadmissible.\nopagebreak

For a set of primes $\alpha$, let $g^{(k)}_\alpha(n)$ denote the number of groups of order $n$, with the Sylow subgroups corresponding to at least $k$ primes in $\alpha$ being direct factors (we can omit the superscript if $k = 1$). It is clear that \smash{$g_{p, q}(n) = g_p(n) + g_q(n) - g^{(2)}_{\{p, q\}}(n)$}, by the inclusion-exclusion principle. Moreover, $g_p(n) = g(p^\nu)g(n/p^\nu)$ where $p^\nu$ is the largest power of $p$ diving $n$.

The strategy will be to choose a suitable $\alpha$ and use these facts to show that $g_\alpha(n) \ge 7$ for all possible graphs of $n$, and it will be clear that there is a group which has no direct $p$-Sylow subgroup factor for any $p$ in $\alpha$, showing that $g(n) > g_\alpha(n)$.

We need $g(p^2 s)g(qr) + 1 \le 7$, which forces $g(p^2 s) \le 3$. By connectivity, $g(p^2 s) = 3$ and thus we have a weak arrow $s \dashrightarrow p^2$. At least one edge must be added for the graph of $n$ to be connected. By focusing on $p^2 q r$, we immediately see that there are three options, corresponding to $M_1(q), \slame$ and $\qlame$:\drawunspace
\begin{center}
	\begin{tikzpicture}[scale=0.95]
		\begin{scope}[shift={(0, 0)}]
			\node[point, label=left:$s$] (s1) {};
			\node[point, label=right:$p^2$, right=of s1] (p1) {};
			\node[point, label=left:$q$, below=of s1] (q1) {};
			\node[point, label=right:$r$, below=of p1] (r1) {};
			\node at ($(r1) + (230:0.8)$) {$\Gamma_1$};
		\end{scope}
		
		\begin{scope}[shift={(4, 0)}]
			\node[point, label=left:$s$] (s2) {};
			\node[point, label=right:$p^2$, right=of s2] (p2) {};
			\node[point, label=left:$q$, below=of s2] (q2) {};
			\node[point, label=right:$r$, below=of p2]	(r2) {};
			\node at ($(r2) + (230:0.8)$) {$\Gamma_2$};		
		\end{scope}
		
		\begin{scope}[shift={(8, 0)}]
			\node[point, label=left:$s$] (s3) {};
			\node[point, label=right:$p^2$, right=of s3] (p3) {};
			\node[point, label=left:$q$, below=of s3] (q3) {};
			\node[point, label=right:$r$, below=of p3] (r3) {};
			\node at ($(r3) + (230:0.8)$) {$\Gamma_3$};
		\end{scope}

		\path[->, thick] (q1) edge (r1) (q2) edge (r2) (q3) edge (r3);
		\path[->, thick] (s1) edge[dashed] (p1) (s2) edge[dashed] (p2) (s3) edge[dashed] (p3);
		\path[->, thick] (q1) edge[dashed] (p1) (p2) edge[dashed] (q2);
		\path[->, thick] (r3) edge[dashed] (p3);
	\end{tikzpicture}
\end{center}\drawunspace
All that remains is to consider suitable subsets $\alpha$ and compute $g(\Gamma)$ by our earlier analysis of regular graphs:
\begin{itemize}
	\item $\Gamma_1$: We have $g_{\{r, s\}}(p^2 q r s) = g(p^2 q s) + g(p^2 q r) - g(p^2 q) = 5 + 5 + \frac{q - 1}{2} - 3 \ge 8.$
	\item $\Gamma_2$: We have $g_{\{r, s\}}(p^2 q r s) = g(p^2 q s) + g(p^2 q r) - g(p^2 q) = 5 + 6 - 4 = 7.$
	\item $\Gamma_3$: We have $g_{\{q, s\}}(p^2 q r s) = g(p^2 r s) + g(p^2 q r) - g(p^2 r) = 5 + 5 - 3 = 7.$
\end{itemize}
We conclude that $g(n) > 7$.

\scase{C.2} \emph{qrs has precisely two edges}. We claim that $n$ is inadmissible as well.

The graph of $qrs$ is \tikz[ww] \pic {twalk};. At least one prime, say $v$, is connected with $p^2$. We need $g(p^2 v) = 3$, which implies the existence of an arrow $v \dashrightarrow p^2$ by \hthref{euppq}.

Without loss of generality, we assume that $q < r < s$. We cannot have an arrow $r \dashrightarrow p^2$ because then the graph of $p^2 r s$ will be an $M_1(r)$ with $r > 3$. If we have an arrow $q \dashrightarrow p^2$, we calculate $g_{\{p, s\}}(n) = 2g(qrs) + g(p^2 q r) - 2g(qr) = 6 + 6 - 4 = 8.$ If instead we have $s \dashrightarrow p^2$, we have $g_{\{q, p\}}(n) = g(p^2 r s) + 2g(qrs) - 2g(rs) = 7$. By the same reasoning, we conclude that $g(n) \ge 8$, completing the proof.

\scase{C.3} \emph{$qrs$ is cyclic.} It is clear that the out-degree of $p^2$ is $1$ and the in-degree is $2$. The arrows to $p^2$ are certainly weak, so we let $q < r$ with $q, r \dashrightarrow p^s$. This means we have an arrow from $p^2$ to $s$. We show that, when this arrow is weak, we get $g(n) = 7$. Replacing it with a strong arrow increases $g$, making $n$ inadmissible.

Let $G$ be a group of order $n = p^2 q r s$. We claim that either $G \cong G_0 \times \cyc{q}$ or $G \cong G_1\times \cyc{s}$. This will prove the result, since it implies $g(n) = g_{\{q, s\}}(n)$. By the same reasoning of the previous sub-case, we see that this is equal to $g(p^2 r s) + g(p^2 q r) - g(p^2 r) = g(\qlamd) + g(\qlamz) - g(p^2 r) = 7$.

It is clear that $n$ is odd, so $G$ is solvable by the Feit-Thompson theorem {\cite{oddsolve}}. Consequently, we have a subgroup $G_0$ of index $q$ by Hall's theorems {\cite[Thm.~3.13]{fgt}}, and since $q$ is the smallest prime dividing $\abs{G}$, it follows from a standard theorem {\cite[Prob.~1A.1]{fgt}} that $G_0$ is normal. Thus if a $q$-Sylow subgroup is normal, we get the first half of the claim. Suppose then that there is no normal $q$-Sylow subgroup.

Consider a Hall subgroup $C$ of order $qrs$, which is cyclic by definition. Since the $s$- and $q$-Sylow subgroups are normalized by $C$, we have $n_s(G), n_q(G) \mid p^2$. We have $n_q(G) = p^2$, since $n_q(G) = p$ implies the existence of a strong arrow \nopagebreak[4] $q \rightarrow p$, contrary to our assumption. Next, we cannot have $n_s(G) = p$ because $s > p$, and we cannot have $n_s(G) = p^2$ because otherwise $p^2 \equiv 1 \Mod{s}$ and thus $p \equiv s - 1 \Mod{s}$. Since $s > p$, this implies $p = s - 1$, which is impossible since $(s, p) \neq (3, 2)$. We conclude that the $s$-Sylow subgroup is normal. 

To see that its complement is normal as well, let $S, P, Q,\text{ and }R$ denote Sylow subgroups corresponding to the primes $s, p, q,\text{ and }r$ respectively. We know that $H = S(PQ)$ is a subgroup, and $S$ is normal in it, so we have $H \cong \cyc{s} \rtimes H_0$ where $H_0$ is a group of \mbox{order $p^2 q$.}

Consider the homomorphism $\phi : H_0 \rightarrow \aut{\cyc{s}} \cong \cyc{s - 1}$ defining the semidirect product. We know that $Q \subseteq \ker \phi$ since $q \nmid s - 1$. If $Q = \ker \phi$, $Q$ will be normalized by $P$, which implies $n_q(G) = 1$, contrary to our assumption. Similarly, we cannot have $\abs{\ker \phi} = pq$ because $pq$ is a cyclic number, so we would get $n_q(G) = p$, which has already been ruled out. It follows that $\phi$ must be the trivial homomorphism, and we conclude that $S$ normalizes $PQ$. It normalizes $R$ for similar reasons since $sr$ is a cyclic number.  It follows that the complement is normal, and this proves the second half of the claim.
\section{Summary of the results}
In what follows, $s, p, q, r$ represent distinct, odd primes.
\nopagebreak
\setenumerate[1]{label=\Roman*., font=\normalfont}
\begin{thm}
	For cyclic-free $n$, we have $g(n) = 6$ precisely when one of the \mbox{following holds:}
	\begin{enumerate} \listspace
		\item$n = pqrs$ where $r \equiv 1 \Mod{qs}$ and $q \equiv 1 \Mod{p}$,\nopagebreak[4] 
		\item\nopagebreak[4] $n = 2pq$ where $q \equiv 1 \Mod{p}$,
		\item $n = 3p^2$ where $p \equiv 1 \Mod{3}$,
		\item $n = p^3 q$ where $p^3 \equiv 1 \Mod{q}$,
		\item $n = p^2 q r$, and one of the following conditions are satisfied:
		\begin{enumerate}
			\item $p \parallel q - 1$ and $r \equiv 1 \Mod{q}$,
			\item $q = 3$ and $p \equiv -1 \Mod{3}$, \nopagebreak[4]
			\item\nopagebreak[4] $p^2 \mid r - 1$ and $p \equiv -1 \Mod{q}$,
		\end{enumerate}
		\item $n = p^2 q^2$ where $p \parallel q + 1$,
		\item $n = n_1 n_2$ where $n_1, n_2$ are arithmetically independent and
		\begin{enumerate}
			\item $n_1 = qr$ where $r \equiv 1 \Mod{q}$ or $n_1 = q^2$,
			\item $n_2 = pqr$ where $r \equiv 1 \Mod{q}$ and $q \equiv 1 \Mod{p}$ or $n_2 = p^2 q$ and $q \mid p + 1$,
		\end{enumerate}
	\end{enumerate} \textspace
	\nopagebreak[4] where no further congruences of the form $\alpha \equiv \pm 1 \Mod{\beta}$ \nolinebreak[4] \mbox{occur}.
\end{thm}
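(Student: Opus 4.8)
The plan is to assemble the classification from the case analysis already carried out, the point being that every cyclic-free $n$ falls, via \hthref{euufd}, into either the connected or the disconnected regime, and both have now been exhausted. What remains is to translate the admissible \emph{graphs} produced above into explicit congruence conditions on the prime factors, and then to verify completeness and pairwise disjointness of the resulting families.

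I would start with the disconnected case. Iterating \hthref{eufrobenius} along \hthref{euufd} gives $g(n) = g(n_1)\cdots g(n_k)$ with each $g(n_i)\ge 2$; since the only way to write $6$ as a product of more than one factor each at least $2$ is $6 = 2\cdot 3$, we must have $k=2$ and $\{g(n_1),g(n_2)\}=\{2,3\}$. Feeding in the classification of cyclic-free numbers with $g=2$ and with $g=3$ recorded above yields item~VII. For connected $n$, \hthref{euclass} reduces the work to the square-free case and the non-square-free case with $\lambda(n)\le 5$. In the square-free case, Cases~1--4 leave exactly two graphs with $g=6$: the four-vertex graph $Q$ of Case~2, whose edges become $q\equiv 1\Mod{p}$ and $r\equiv 1\Mod{qs}$ (item~I), and the Case~4 graph obtained by adjoining $v_1\to v_2$, whose out-degree-$2$ vertex is forced to carry the label $2$, giving $n = 2pq$ with $q\equiv 1\Mod{p}$ (item~II). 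For non-square-free $n$ with $\lambda(n)\le 4$ I would read off the admissible configurations from Facts \hthref{euppq}, \hthref{eupppq}, \hthref{euppqq} and \hthref{euppqr} together with the tabulated regular graphs: the $p^2q$ analysis gives item~III, the $p^3q$ analysis gives item~IV, the $p^2q^2$ analysis gives item~VI, and the $p^2qr$ analysis gives the three sub-cases of item~V, namely the irregular graph $\m1(3)$ and the regular graphs $\slame$ and $\slamz$ (while $\hlame$ and all three-edge graphs already give at least $7$). Finally, Cases~A, B and C show that no $n$ with $\lambda(n)=5$ has $g(n)=6$, since Case~C.3 realises only the value $7$; this completes the ``only if'' direction.

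For the ``if'' direction I would run back through the same list: each family is precisely the set of $n$ realising one of these labelled graphs, and $g(n)=6$ for each was already established --- by Hölder's formula, by the \hthref{eujoin}--\hthref{eustick} recursions for $Q$ and the Case~4 graph, by the explicit polynomial formulas of the Facts, or by the GAP computation on the displayed representative together with regularity (the value of $g$ on a regular graph being independent of the vertex labels). The closing clause ``no further congruences of the form $\alpha\equiv\pm1\Mod{\beta}$ occur'' is exactly the statement that the generalized Hölder graph of $n$ has no edges beyond the listed ones, i.e.\ that $n$ realises the stated graph and not a strictly larger one; I would record this once rather than repeating it in each item.

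The main obstacle is bookkeeping rather than any single difficult step. One must confirm that the square-free Cases, the $\lambda\le 4$ Facts, and the $\lambda=5$ Cases between them account for every connected cyclic-free $n$ exactly once, and must translate each abstract graph --- with its weak/strong arrow annotations, e.g.\ $q\dashrightarrow p^2$ versus $q\to p^2$, and the difference between $\slame$ and $\hlame$ arising from $p^2\dashrightarrow r$ versus $p^2\to r$ --- into precisely the right divisibility condition, respecting the tacit orderings on $p,q,r,s$ and the fact that for the vertex labelled $2$ several of the required arrows hold automatically. A secondary point requiring care is checking that the seven families are pairwise disjoint, so that the statement is a genuine classification rather than merely a covering.
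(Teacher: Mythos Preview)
Your proposal is correct and follows exactly the paper's approach: the theorem is a summary of the case analysis carried out in Section~3, and your plan correctly identifies which admissible graph from each case corresponds to which item in the list, with the ``no further congruences'' clause encoding that the Hölder graph has no extra edges. The only thing to add is that the paper does not present a separate proof of this theorem at all --- the theorem \emph{is} the summary --- so your assembly plan is precisely what is implicit in the paper's structure.
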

\begin{keepintact}
\setenumerate[1]{label=\Roman*., font=\normalfont}
\begin{thm}
	For cyclic-free $n$, we have $g(n) = 7$ precisely when one of the \mbox{following holds:}
	\begin{enumerate}	\listspace
		\item $n = 5pq$ where $p \equiv q \equiv 1 \Mod{5}$,
		\item $n = 3qr$ where $q \equiv r \equiv 1 \Mod{3}$ and $r \equiv 1 \Mod{q}$,
		\item $n = 3pqr$ where $p \equiv q \equiv 1 \Mod{3}$ and $r \equiv 1 \Mod{q}$,
		\item $n = 5p^2$ where $p \equiv 1 \Mod{5}$,
		\item $n = p^3 q$ where $p \equiv -1 \Mod{q}$,
		\item $n = 5p^2 q$ where $q \equiv 1 \Mod{5}$ and $p \equiv -1 \Mod{5}$,
		\item $n = p^2 q r$ where $p^2 \mid q - 1$ and $r \equiv 1 \Mod{q}$,
		\item $n = p^2 q^2$ where $p^2 \mid q + 1$,
		\item $n = p^2 q r s$ where $p \equiv -1 \Mod{qr}$ and $p \parallel s - 1$,
	\end{enumerate} \textspace
	where no other congruences of the form $\alpha \equiv \pm 1 \Mod{\beta}$ \nolinebreak[4] \mbox{occur}.
\end{thm}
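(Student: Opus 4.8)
The plan is to treat this statement as a compilation of the case analysis of Section~3: every branch there ended either in ``inadmissible'' or in a concrete (generalized) Hölder graph together with its exact value of $g$, so all that remains is to gather the branches with $g = 7$, read off the congruences they encode, and confirm that the resulting list is complete and non-redundant.

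First I would reduce to the connected case. Writing $n = n_1 \cdots n_k m$ as in \hthref{euufd}, we have $m = 1$ by cyclic-freeness, the $n_i$ are pairwise arithmetically independent, and \hthref{eufrobenius} gives $g(n) = \prod_i g(n_i)$ with each $g(n_i) \ge 2$; since $7$ is prime this forces $k = 1$, so $n$ is connected. Then \hthref{euclass} gives $\lambda(n) \le 5$, and the argument breaks into the three tiers of Section~3, according to $\lambda(n)$ and whether $n$ is squarefree.

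The body of the proof is then bookkeeping. In the squarefree tier (Section~3.2) exactly three of the four graph cases produce $g = 7$: Case~3 with the out-degree-$2$ vertex labelled $5$ gives item~I ($n = 5pq$ with $p \equiv q \equiv 1 \Mod{5}$); the same case with that vertex labelled $3$ and one of its two leaves extended by a further vertex gives item~III ($n = 3pqr$ with $p \equiv q \equiv 1 \Mod{3}$ and $r \equiv 1 \Mod{q}$); and Case~4 with the out-degree-$2$ vertex labelled $3$ gives item~II ($n = 3qr$). In the non-squarefree tier with $\lambda(n) \le 4$ (Section~3.3), \hthref{euppq} with $q = 5$ gives item~IV ($n = 5p^2$), \hthref{eupppq} with $q \mid p+1$ gives item~V ($n = p^3 q$ with $p \equiv -1 \Mod{q}$), and \hthref{euppqq} with $p^2 \mid q+1$ gives item~VIII ($n = p^2q^2$); for the shape $p^2 q r$, the unique irregular admissible graph $\m1(5)$ gives item~VI ($n = 5p^2q$ with $q \equiv 1 \Mod{5}$, $p \equiv -1 \Mod{5}$) and the unique regular admissible graph $\hlame$ --- witnessed by $g(32661) = 7$ with $32661 = 3^2 \cdot 19 \cdot 191$ --- gives item~VII ($n = p^2 q r$ with $p^2 \mid q-1$, $r \equiv 1 \Mod{q}$), while the three-edge analysis yields nothing. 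In the non-squarefree tier with $\lambda(n) = 5$ (Section~3.4), the shapes $p^3 q r$ and $p^2 q^2 r$ and the subcases C.1 and C.2 of $p^2 q r s$ are all inadmissible, and only subcase~C.3 (graph $q, r \dashrightarrow p^2 \dashrightarrow s$) survives, contributing item~IX ($n = p^2 q r s$ with $p \equiv -1 \Mod{qr}$ and $p \parallel s - 1$). Throughout I would use the evident dictionary between arrows and congruences: a strong arrow $a \to b$ reads $b \equiv 1 \Mod{a}$; a weak arrow $b \dashrightarrow p^2$ reads $p \mid b+1$ but $p \nmid b-1$ (that is, $b \equiv -1 \Mod{p}$, since $b$ is odd); a weak arrow $p^2 \dashrightarrow b$ reads $p \parallel b - 1$.

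The step I expect to require the most care is the closing clause ``no other congruences of the form $\alpha \equiv \pm 1 \Mod{\beta}$ occur,'' which asserts that the generalized Hölder graph of $n$ is \emph{exactly} the graph attached to its item, with no extra edges. I would justify it by observing that for each item the congruences listed are precisely the edges of the associated graph, and that any further congruence among the vertices would either introduce a new edge --- which by the monotonicity estimates underlying Section~3 (chiefly \hthref{euinout} and \hthref{eujoin}) forces $g(n) > 7$ --- or be impossible for elementary reasons: an odd prime $a$ cannot satisfy both $b \equiv 1 \Mod{a}$ and $b \equiv -1 \Mod{a}$, and the small fixed labels $3$ and $5$ admit no further incidences (for instance $5 \equiv \pm 1 \Mod{p}$ would force $p = 3$, incompatible with $p \equiv 1 \Mod{5}$). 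Completeness then follows because Section~3 exhausts the admissible connected numbers and resolves every branch, and non-redundancy because distinct items pin down non-isomorphic graphs or distinct labels. The one residual danger is an arithmetic slip in one of the formula-based analyses (\hthref{euppq}, \hthref{eupppq}, \hthref{euppqq}, \hthref{euppqr}) or in the GAP-assisted evaluation of the regular $p^2 q r$ graphs; granting those, the theorem is pure bookkeeping.
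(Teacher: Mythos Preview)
Your proposal is correct and follows exactly the paper's approach: the theorem is a summary of the case analysis in Section~3, and you correctly match each of the nine items to its source branch (Cases~3 and~4 of \S3.2, the four formula-based analyses of \S3.3 together with $\m1(5)$ and $\hlame$, and Subcase~C.3 of \S3.4). One small slip: your stated dictionary for the weak arrow $b \dashrightarrow p^2$ is reversed --- the paper defines it as $b \mid p+1$ (equivalently $p \equiv -1 \Mod{b}$), not $p \mid b+1$ --- though you apply the correct reading when deriving items~VI and~IX.
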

\end{keepintact}

\begin{center}\section*{References}\end{center}
\microtypesetup{protrusion=false}
\printbibliography[heading=none]

\end{document}